\documentclass[]{imsart}

\RequirePackage[OT1]{fontenc}
\RequirePackage{amsthm,amsmath,amssymb}
\usepackage{epsfig}
\usepackage{graphicx}
\usepackage{epstopdf,color}
\usepackage[usenames,dvipsnames,svgnames,table]{xcolor}
\usepackage{multirow}
\usepackage{listings}
\RequirePackage{natbib}
\RequirePackage[colorlinks,citecolor=blue,urlcolor=blue]{hyperref}
\allowdisplaybreaks[4]
\theoremstyle{plain}
\numberwithin{equation}{section}
\numberwithin{figure}{section}
\usepackage{xr}




\startlocaldefs
\numberwithin{equation}{section}
\theoremstyle{plain}
\newtheorem{lemma}{Lemma}[section]
\newtheorem{theorem}{Theorem}[section]

\endlocaldefs

\newcommand{\B}{{\mathbf B}}

\newcommand{\y}{{\mathbf y}}

\newcommand{\E}{{\mathbb{E}}}

\newcommand{\tr}{{\mathop{\text{\rm tr}}}}

\definecolor{darkblue}{rgb}{0,0.08,0.45}

\renewcommand{\(}{\left(}
\renewcommand{\)}{\right)}

\newcommand{\var}{{\rm Var}}

\newcommand{\mb}{\mathbf}
\newcommand{\du}{\circ}
\newcommand{\di}{{\rm Diag}}

\begin{document}

\begin{frontmatter}
\title{Spectral statistics of high dimensional sample covariance matrix with unbounded population spectral norm}
	\runtitle{Spectral statistics of sample covariance matrix}
\author{Yanqing Yin}
\runauthor{Yanqing Yin}
\begin{abstract}
In this paper, we establish some new central limit theorems for certain spectral statistics of a high-dimensional sample covariance matrix under a divergent spectral norm population model. This model covers the divergent spiked population model as a special case. Meanwhile, the number of the spiked eigenvalues can either be fixed or grow to infinity. It is seen from our theorems that the divergence of population spectral norm affects the fluctuations of the linear spectral statistics in a fickle way, depending on the divergence rate.
\end{abstract}
\begin{keyword}[class=MSC]
\kwd[Primary ]{62H10}
\kwd[; secondary ]{62H15}.
\end{keyword}

\begin{keyword}
\kwd{Large covariance matrix}
\kwd{unbounded spectral norm}
\kwd{Linear spectral statistics}
\kwd{spiked eigenvalue}
\end{keyword}
\end{frontmatter}

\section{Introduction}
Linear spectral statistics of large sample covariance matrices play important roles in large-scale statistical inference.
Let $\y _1,\ldots,\y _n$ be $n$ independent observations from the population $\y \in\mathbb R^p$ with zero mean and covariance matrix $\Sigma$.
The sample covariance matrix of the observations is
$$
\B_n=\frac{1}{n}\sum_{j=1}^n\y _j\y _j'.
$$
Denote the eigenvalues of $\B_n$ as $\lambda_1\geq \lambda_2\geq\cdots\geq\lambda_p$.
Then, for a test function $f$ defined on $\mathbb R$, its associated {\em linear spectral statistic} (LSS) \citep{BSbook} of $\B_n$ is of the form
\begin{equation}\label{lss}
\frac{1}{p}\sum_{i=1}^pf(\lambda_i)=\int f(x)dF^{\B_n}(x),
\end{equation}
where $F^{\B_n}=(1/p)\sum_{i=1}^p\delta_{\lambda_i}$ is the {\em spectral distribution} (SD) of $\B_n$ and $\delta_{a}$ is the Dirac measure at the point $a$.

In view of the wide application in statistics, the fluctuation of the LSS has been investigated by many authors under high dimensional regime, where the dimension $p$ of the observations grows at the same rate as the sample size $n$ such that $p/n\to c\in (0, \infty)$. \cite{jonsson1982some} firstly considers the fluctuation of LSSs associated with the polynomial test functions under the null case where the entries of $\mb y$ are i.i.d.. For better applications in statistics, \cite{BS04} obtain the {\em central limit theorem} (CLT) for LSS associated with test functions that are analytic beyond the null case. However,
they need a Gaussian like fourth moment assumption.  This assumption is further relaxed in \cite{PZ08,zheng2015substitution}. Other extensions on this topic can be found in \cite{gao2017high,huli19}. We also refer the readers to \cite{ledoit2002some,Schott05,srivastava2005some,yang15}, etc., for applications of LSSs in large-scale statistical inference. 

A critical assumption made in the above references is that the spectral norm of the covariance matrix $\mb\Sigma$ need to be bounded uniformly in $p$. Thus, those results are excluded from many applications in various fields such as finance and economics, where a group of leading eigenvalues of $\mb\Sigma$ may diverge to infinity as the increase of the dimension $p$, see \cite{baltagi2017asymptotic}. 
In the light of this fact, we investigate in this paper the joint asymptotic distribution of LSSs of $\B_n$ with polynomial test functions when the spectral norm of $\mb\Sigma$ may diverge.
The results show that the joint distributions of the spectral statistics are still asymptotically Gaussian under suitable moment conditions but  their limiting mean vector and covariance matrix are quite different from the existing results under the assumption of bounded spectral norm on $\mb\Sigma$. A new feature of the proposed CLT is that the main terms of the limiting covariance matrix may vary depending on the divergence order of the spectral norm.
This characterizes how the spectral norm of $\mb\Sigma$ contributes to the fluctuations of LSSs.

The remaining parts of the paper are organized as follows. Section \ref{sec:clt} establishes the new CLT for LSSs of $\B_n$ with polynomial test functions under the divergent spiked population model. Some simulations are presented in Section \ref{sec:sim}.
Technical proofs of the main theorems are postponed to Section~\ref{sec:proofs}-\ref{sec:proof3}. 

In the rest of this paper, we use $\mb 1_n$ denote the $n$ dimension vector whose entries all equal 1. For integer $k$, let $n_{(k)}=n!/(n-k)!$.
 Use $\di\(\mb B\)=\(b_{1,1},b_{2,2},\cdots,b_{n,n}\)'$ to stand for the vector formed by the diagonal entries of $\mb B$, use $\mb D_{\mb B}$ to denote the diagonal matrix of $\mb B$ (replacing all off-diagonal entries with zero).  We also use $\mb A \du \mb B=(a_{ij}b_{ij})$ to denote the Hadamard product of two matrices $\mb A=(a_{ij})$ and $\mb B=(b_{ij})$ and use $\mb A ^{\du k}$ to denote the Hadamard product of $k$ matrices $\mb A$. For two sequences $a_n$ and $b_n$, we use $a_n\simeq b_n$ to stand for $a_n=O(b_n)$ and $b_n=O(a_n)$ as $n\to\infty$.

\section{Model assumptions and main theorems}
\label{sec:clt}
This section is to present our model assumptions and main theorems. We first introduce the following {\em divergent spectral norm population model} on the population covariance matrix $\mb\Sigma$. Specifically,
let  $(\tau_j)$ be the eigenvalues of $\mb\Sigma$ arranging in descending order, which are grouped into two classes $G_1$ and $G_2$, i.e.
\begin{align*}
	{\bf spec}(\mb\Sigma)= \underbrace{\{\tau_1,\ldots\tau_{k_p}\}}_{G_1}\cup\underbrace{\{\tau_{k_p+1},\ldots,\tau_p\}}_{G_2},
\end{align*}
satisfying
\begin{equation*}
	\inf_p  \min\limits_{1\le j\le k_p} \tau_j\to\infty,\ \inf_p  \min\limits_{k_p\le j\le p} \tau_j>0 \quad
	\text{and}\quad
	\sup_p \max\limits_{k_p\le j\le p} \tau_j<\infty.
\end{equation*}
The above model includes the so-called {\em spiked population model}, which is originally introduced in \cite{Johnstone01}, as a special case.
Note that the number of divergent eigenvalues is allowed to increase to infinity under such model.
Now we consider the joint CLT for LSSs of the sample covariance matrix $\B_n$ associated with test functions $f_\ell(x)=x^\ell$, $\ell=1,\ldots,m$, say
$$
T_\ell\triangleq \sum_{i=1}^pf_\ell(\lambda_i)=\tr(\B_n^\ell),\quad \ell=1,\ldots,m.
$$

We introduce the following model assumptions. 

\begin{description}
	\item[Assumption A:]
	As  $n\to \infty$,  $p=p_n\to\infty$ such that $p/n\to c\in (0,\infty)$. 
	\item[Assumption B:] The population $\y $ follows the independent components model, that is, $\y =\mb\mu+\mb\Sigma^{\frac12}\mb x$ with $\mb x=(x_1,\ldots,x_p)'$ being an array of i.i.d.\ random variables satisfying
	\begin{equation}\label{m-con}
		\E(x_{1})=0,\quad \E(x_{1}^2)=1,\quad \E(x_{1}^4)=\nu_4+3\quad \text{and}\quad \E(|x_{1}|^\gamma)<\infty,
	\end{equation}
	for some constant $\gamma>4$.
	\item[Assumption C:] The eigenvalues in $G_1$ satisfy $\max_{1\leq j\leq k_p}\tau_j=O(\log p)$.
		\item[Assumption C':] The eigenvalues in $G_1$ satisfy $\max_{1\leq j\leq k_p}\tau_j\simeq p^\alpha $ and the size of $G_1$ is $k_p/n\to\beta$ for some $\alpha\geq 0$ and $0\leq \beta\leq 1$.
\end{description}
Assumption A and B are commonly assumed in random matrix theory, especially in the studies of CLT for LSS.
 We must note that $\gamma=4$ is in general sufficient to ensure the CLT for LSS when the spectral norm of $\mb\Sigma$ is bounded, as can be seen from the previously mentioned references. However, higher-order moments are needed when $\|\mb\Sigma\|$ diverges. 
 
 Our first theorem is then described as follows.
\begin{theorem}\label{kjoint}
	Suppose that the Assumptions A-C hold. Then, for any fixed $m$, the random vector
	$$
	\mb \Psi_m^{-\frac12}\left(T_1-\E T_1,T_2-\E T_2,\cdots, T_m-\E T_m\right)'\xrightarrow {D} N_m(0, \mb I_m),
	$$	
where $\mb\Psi_m$ denotes the covariance matrix of $(T_1,\ldots,T_m)'$.
\end{theorem}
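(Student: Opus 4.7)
The plan is to reduce the joint statement to a one-dimensional CLT by the Cramér-Wold device and then prove that one-dimensional CLT via a martingale central limit theorem. Since $\mb\Psi_m$ is defined as the exact covariance matrix of $(T_1,\ldots,T_m)'$, no explicit formula for the limiting mean or variance is needed; only asymptotic joint normality must be established. Concretely, fix $\mb a=(a_1,\ldots,a_m)'\in\RR^m$ and set $S_n=\sum_{\ell=1}^m a_\ell(T_\ell-\E T_\ell)$; it suffices to show $S_n/\sqrt{\var(S_n)}\xrightarrow{D} N(0,1)$.

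As a preliminary step, I would truncate and recenter the i.i.d.\ entries $x_{ij}$ at a level of the form $\eta_n n^{1/\gamma}$ with $\eta_n\to 0$ slowly, exploiting $\E|x_1|^\gamma<\infty$ for some $\gamma>4$. The key point here is that each $T_\ell$ is a polynomial of degree at most $2\ell$ in the $x_{ij}$'s whose coefficients are built from entries of $\mb\Sigma$; under Assumption~C one has $\|\mb\Sigma\|=O(\log p)$, so the truncation error after normalization vanishes, and we can proceed as if $|x_{ij}|$ were bounded by a slowly growing sequence. Next, expand
\[
T_\ell=\frac{1}{n^\ell}\sum_{j_1,\ldots,j_\ell=1}^n\prod_{r=1}^\ell \y_{j_r}'\y_{j_{r+1}}\qquad (j_{\ell+1}\equiv j_1)
\]
and form the martingale decomposition $S_n-\E S_n=\sum_{j=1}^n Z_{n,j}$ with $Z_{n,j}=(\E_j-\E_{j-1})S_n$, where $\E_j$ denotes conditional expectation given $\y_1,\ldots,\y_j$. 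Each $Z_{n,j}$ is a polynomial in $\y_j=\mb\Sigma^{1/2}\mb x_j$ whose deterministic/random coefficients are traces of products of rank-one updates formed from $\y_1,\ldots,\y_{j-1}$.

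The core of the argument then consists of verifying the two hypotheses of the martingale CLT. First, the conditional Lindeberg condition is reduced to a Lyapunov-type bound $\sum_j \E|Z_{n,j}|^{2+\delta}=o(\var(S_n)^{1+\delta/2})$ for some small $\delta>0$; Burkholder's inequality together with the standard Bai--Silverstein quadratic-form concentration lemma reduces this to bounds on $\tr\mb\Sigma^k$ for $k\leq 2m(2+\delta)$. Assumption~C supplies $\tr\mb\Sigma^k=O(p(\log p)^{k-1})$, and the slack $\gamma-4>0$ converts to the $\delta$ needed to absorb the $(\log p)^{O(1)}$ factors. Second, the conditional variance $\sum_{j=1}^n \E_{j-1}|Z_{n,j}|^2$ must converge in probability to a deterministic limit; expanding the conditional squares and applying the same quadratic-form concentration lemma reduces this to showing that certain traces of random matrices built from $\y_1,\ldots,\y_{j-1}$ concentrate on their expectations, which again follows from Assumption~C-controlled trace bounds.

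The main obstacle, in my view, is controlling the contribution of the divergent block $G_1$ throughout these moment estimates. Because the $k_p$ leading eigenvalues can grow like $\log p$, traces of high powers of $\mb\Sigma$ acquire factors of $k_p(\log p)^k$ that must be dominated by the $n^\ell$ denominators in $T_\ell$ and ultimately by $\var(S_n)$; balancing these orders is precisely what forces the logarithmic bound in Assumption~C (rather than the polynomial rate of Assumption~C') and what motivates the sharper moment assumption $\gamma>4$. Once both the Lyapunov condition and the convergence of conditional variance are verified, the martingale CLT yields asymptotic normality of $S_n/\sqrt{\var(S_n)}$, and the Cramér-Wold device then gives the joint CLT after multiplication by $\mb\Psi_m^{-1/2}$, completing the proof of Theorem~\ref{kjoint}.
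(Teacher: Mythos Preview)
Your strategy is a genuinely different route from the paper's. For Theorem~\ref{kjoint} the paper does \emph{not} use the martingale CLT at all; it proves the result by the \textbf{method of moments}, following (and substantially extending) the graph-combinatorial framework of Jonsson. Concretely, the paper expands each mixed moment $\E\prod_{s=1}^k(T_s-\E T_s)^{m_s}$ as a sum over ``$\mb Q$-graphs'' built from vertical edges (encoding the $x_{ij}$'s) and horizontal edges (encoding entries of $\mb\Sigma$), classifies the graphs into four types, and shows that only the Type~IV graphs---those in which the basic subgraphs are matched pairwise---survive after normalization; these reproduce exactly the mixed moments of the target multivariate normal. The novel technical input is a graph-theoretic bound (their Lemma~\ref{roof}) that controls, via the number of odd-degree vertices in the roof, the contribution of $\mb\Sigma$ when $\|\mb\Sigma\|$ is allowed to grow like $\log p$. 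The martingale CLT is reserved in the paper for Theorem~\ref{2joint} ($m=2$), where explicit formulae for the mean and covariance are also required.

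What each approach buys: the moment method handles all $m$ simultaneously with a uniform combinatorial argument and cleanly isolates the Wick-pairing structure that yields Gaussianity, at the cost of the graph machinery. Your martingale route avoids that machinery and is closer in spirit to the paper's treatment of $T_1,T_2$, but for general $m$ the difference $(\E_j-\E_{j-1})T_\ell$ is a polynomial of degree $2\ell$ in $\mb x_j$ with random matrix coefficients, not just a quadratic form; so the ``standard Bai--Silverstein quadratic-form lemma'' you invoke does not apply directly, and both the Lyapunov bound and, more seriously, the conditional-variance stability $\var\big(\sum_j\E_{j-1}Z_{n,j}^2\big)=o(\var(S_n)^2)$ require a layered moment calculation whose bookkeeping you have not indicated. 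The approach is plausible, but as written the proposal underestimates precisely the step where the paper's combinatorics earn their keep.
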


Theorem \ref{kjoint} illustrates that when the spectral norm of $\mb\Sigma$ diverges to infinity at the rate of $O(\log(p))$, the joint distribution of LSSs with polynomial test functions still obeys the normal law asymptotically. In this point, it generalizes the classic results built in \cite{jonsson1982some,BS04,PZ08,zheng2015substitution}.
Note that the implementation of this theorem needs the knowledge of the centralization terms $(\E(T_j))$ and the covariance matrix $\mb\Psi_m$, which have no explicit and unified expressions at present. However, this only involves routine calculations for given $m$.

When the variables $(z_j)$ possess finite higher-order moments, we have the following theorem.

\begin{theorem}\label{2joint}
	Suppose that  the Assumptions A-C' hold. In addition, Assumptions B holds with $\gamma=6$ when { $4\alpha+\beta\leq 1$} and with $\gamma=16$ when { $4\alpha+\beta>1$}, then the random vector
	$$
	\mb\Psi_2^{-\frac12}\(T_1-\E T_1,T_2-\E T_2\)'\xrightarrow {D}N_2(0,\mb I_2),
	$$	
where the expectations  are
	$$\E T_1=\tr (\mb \Sigma)\quad\text{and}\quad \E T_2=n^{-1}\left[\nu_4\tr\(\mb \Sigma\du\mb \Sigma\)+{\tr}^2 (\mb \Sigma)+(n+1)\tr\(\mb \Sigma^2\)\right],
	$$
and the covariance matrix is $\mb\Psi_2=(\psi_{ij})_{2\times 2}$ with its entries
	\begin{align*}
	\psi_{11}&=n^{-1}\left[\nu_4\tr\(\mb \Sigma^{\du2}\)+2\tr\(\mb \Sigma^2\)\right],\\
	\psi_{12}&=\psi_{21}=n^{-2}\left[4\tr(\mb \Sigma^2)\tr (\mb \Sigma)+2\nu_4\tr\(\mb \Sigma^{\du2}\)\tr \mb \Sigma+2\nu_4n\tr\(\mb \Sigma\du\mb \Sigma^2\)+4n\tr(\mb \Sigma^3)\right],\\
		\psi_{22}&=n^{-3}\Big[8\tr(\mb \Sigma^2){\tr}^2(\mb \Sigma)+4\nu_4{\tr}^2(\mb \Sigma)\tr\(\mb \Sigma^{\du2}\)+16n\tr(\mb \Sigma)\tr(\mb \Sigma^3)\\\notag
		&\quad+4n{\tr}^2(\mb \Sigma^2)+8\nu_4n\tr\(\mb \Sigma\du\mb \Sigma^2\)\tr\mb \Sigma
		+4\nu_4n^2\tr\(\mb \Sigma^2\du\mb \Sigma^2\)+8n^2\tr(\mb \Sigma^4)\Big].
	\end{align*}
\end{theorem}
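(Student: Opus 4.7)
The plan is to combine explicit moment calculations for the mean vector and the covariance matrix $\mb\Psi_2$ with a martingale-difference CLT to obtain the joint asymptotic normality. First I would derive $\E T_1$ and $\E T_2$ directly. Since $T_1=\tr(\B_n)=n^{-1}\sum_j \y_j'\y_j$, we have $\E T_1=\tr(\Sig)$. For $T_2=\tr(\B_n^2)=n^{-2}\sum_{j,k}(\y_j'\y_k)^2$ I would split the double sum into the diagonal contributions $j=k$, where the standard quadratic-form identity $\E(\mb x_1'\Sig\mb x_1)^2=\nu_4\tr(\Sig\du\Sig)+\tr^2(\Sig)+2\tr(\Sig^2)$ produces the $\nu_4$-weighted and the $\tr^2(\Sig)$ terms, and the off-diagonal contributions $j\neq k$, which contribute $n(n-1)\tr(\Sig^2)/n^2$. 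Collecting these recovers the claimed formula for $\E T_2$.

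For the entries of $\mb\Psi_2$ I would use a martingale decomposition. Let $\E_k$ denote conditional expectation given $\y_1,\ldots,\y_k$, set $D_k^{(\ell)}=(\E_k-\E_{k-1})T_\ell$, so that $T_\ell-\E T_\ell=\sum_{k=1}^n D_k^{(\ell)}$ is a sum of martingale differences. For $\ell=1$ this reduces to $D_k^{(1)}=n^{-1}(\y_k'\y_k-\tr(\Sig))$ and yields $\psi_{11}$ at once. For $\ell=2$ I would isolate the contribution of $\y_k$ to $\tr(\B_n^2)$, then use the independent components representation $\y_j=\Sig^{1/2}\mb x_j$ to rewrite $\E(D_k^{(\ell_1)}D_k^{(\ell_2)})$ as a combinatorial sum over index matchings in the entries of $\mb x$: the Hadamard-product traces $\tr(\Sig\du\Sig)$, $\tr(\Sig\du\Sig^2)$, $\tr(\Sig^2\du\Sig^2)$ arise from equal-index pairs weighted by $\nu_4$, while the pure trace powers come from Isserlis-type (Wick) pairings. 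Summing over $k$ would recover the three off-diagonal and the $\psi_{22}$ formulas.

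The joint CLT I would establish by the martingale CLT applied to every fixed linear combination $S_n=a_1(T_1-\E T_1)+a_2(T_2-\E T_2)=\sum_k D_k$ with $D_k=a_1 D_k^{(1)}+a_2 D_k^{(2)}$. This requires: (i) truncating the $x_{ij}$'s at a level $\eta_n$ for which $np\,\eta_n^{-\gamma}\E|x_{11}|^\gamma\to 0$ and the replacement errors in $T_1,T_2$ are negligible after re-centering; (ii) a Lyapunov-type bound $\sum_k\E|D_k|^4=o\bigl((a'\mb\Psi_2 a)^2\bigr)$; and (iii) convergence in probability of the predictable quadratic variation $\sum_k\E_{k-1}|D_k|^2$ to $a'\mb\Psi_2 a$, which follows by combining the mean computation above with a variance-vanishing estimate for that conditional moment.

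The main obstacle will be steps (i) and (ii) in the regime $4\alpha+\beta>1$, where the spiked block dominates: trace quantities such as $\tr(\Sig^2\du\Sig^2)$ are of order $k_p\,\tau_{\max}^4\simeq n^{4\alpha+\beta}$, while the unwanted remainder terms in a fourth-moment bound on $D_k^{(2)}$ scale like $\|\Sig\|^{4}\simeq p^{4\alpha}$ per increment $k$, and even after summing they must remain dominated by $(\Var T_2)^2\simeq n^{-2+2(4\alpha+\beta)}$. Matching these orders forces $\eta_n$ to be small enough that the residual normalised spike moments are controlled, which is only compatible with negligible replacement error when $\gamma\ge 16$. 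In the subcritical regime $4\alpha+\beta\le 1$ the fluctuations are bulk-driven, the standard truncation at $\eta_n\simeq n^{-1/2}(\log n)^c$ suffices, and $\gamma=6$ is enough, in line with the high-dimensional CLT arguments in \cite{BS04,PZ08}.
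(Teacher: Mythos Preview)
Your proposal is correct and follows essentially the same architecture as the paper: truncation/centralisation, explicit moment computations for $\E T_1,\E T_2$ and $\mb\Psi_2$, a martingale-difference decomposition $D_k=(\E_k-\E_{k-1})(aT_1+bT_2)$, and the martingale CLT verified via a Lyapunov fourth-moment bound together with control of the predictable quadratic variation.

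Two points where your write-up departs from the paper are worth flagging. First, the paper computes the entries of $\mb\Psi_2$ by a direct expansion of $\E T_2^2$ and $\E T_1T_2$ (splitting $\E T_2^2$ into five blocks $P_1,\ldots,P_5$ according to coincidence patterns among the four summation indices), rather than by summing $\sum_k\E\bigl(D_k^{(\ell_1)}D_k^{(\ell_2)}\bigr)$ as you propose; your route is equivalent but the cross terms in $D_k^{(2)}$ involve the random $\y_i$ for $i<k$, so the bookkeeping is heavier. Second, the truncation is handled differently from what you sketch: in the subcritical regime $4\alpha+\beta\le 1$ the paper truncates at $\delta_n n^{2/\gamma}=\delta_n n^{1/3}$ (not at a level $\simeq n^{-1/2}(\log n)^c$, which would be below $1$), while in the supercritical regime $4\alpha+\beta>1$ it performs \emph{no} truncation at all---the assumption $\gamma=16$ is used directly so that moments such as $\mu_8$ appearing in the Lyapunov bounds are finite, rather than to make a truncation-replacement error negligible.
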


In many statistical applications such as tests for covariance structure, statistics $T_1$ and $T_2$ play important roles. Theorem \ref{2joint} demonstrates the asymptotic Gaussianity of the vector $(T_1, T_2)'$ after normalization, where the spectral norm of $\mb \Sigma$ diverge at the rate of $n^{\alpha}$. In addition, exact formulae of the expectations and the covariance matrix are given. These formulae exhibit the way how the population spectrum contributes to the limiting distribution of the spectral statistics.
For instance, we analyze the statistic $T_2$ under a simplified spiked population model \citep{Johnstone01}, where the covariance matrix $\mb\Sigma$ is diagonal and has only one spiked eigenvalue, i.e.
\begin{align*}
\mb\Sigma= {\rm diag}(\tau_1, 1,\ldots,1).
\end{align*}
We then have $\E T_2=[(\tau_1+p-1)^2+(n+1+\nu_4)(\tau_1+p-1)]/n$ and
\begin{description}
	\item[Case 1.] when $\tau_1=O(1)$,
$$T_2-\E T_2\xrightarrow {D}N\big(0,4 c (2 + 5 c + 2 c^2) + 4 c (1 + 2 c + c^2) \nu_4\big),$$
	\item[Case 2.] when $\tau_1=O(p^{\frac14})$,
$$T_2-\E T_2\xrightarrow {D}N\big(0,4 c (2 + 5 c + 2 c^2) + 4 c (1 + 2 c + c^2) \nu_4+\delta^4c(8+4\nu_4)\big),,$$
	\item[Case 3.] when $\frac{\tau_1^4}{p}\to\infty$,
$$\frac{\sqrt{n}}{\tau_1^2}(T_2-\E T_2)\xrightarrow {D}N\big(0,8+4\nu_4\big),$$
\end{description}

where the constant $\delta=\lim\tau_1/p^{1/4}$ in the second asymptotic variance.
These results reveal that, when the single spike is weak (Case 1), it only results in a mean shift in the asymptotic distribution of an LSS, which is consistent with the main theorem in \cite{BS04}. As the spike becomes stronger, it will gradually lead to shifts of both mean and variance of an LSS (Case 2), and finally dominates its asymptotic distribution (Case 3). 

If the number of divergent eigenvalues also tends to infinity, that is, $\beta>0$, then the leading terms in the expressions in Theorem \ref{2joint} are not apparent and need to identify carefully. This phenomenon exhibits 
 that the researches is non-trivial and full of difficulties when $\|\mb \Sigma\|$ diverges.
 
In practice, the mean vector of the population is often unknown, and we shall use the revised sample covariance matrix $$
\B_n^0=\frac{1}{n-1}\sum_{j=1}^n(\y _j-\bar \y _j)(\y _j'-\bar \y _j),
$$
where $\bar \y _j=\frac1n\sum_{j=1}^n\y _j$ is the sample mean. Then we have the following theorem concerning the joint distribution of $
T_1^0=\tr(\B_n^0)$ and $
T_2^0=\tr(\(\B_n^0\)^2)$.
\begin{theorem}\label{2jointm}
Suppose that the conditions in Theorem \ref{2joint} are satisfied, then the random vector
	$$
	\mb\Psi_2^{-\frac12}\(T_1^0-\E T_1^0,T_2^0-\E T_2^0\)'\xrightarrow {D}N_2(0,\mb I_2),
	$$	
where the expectations  are
	$$\E T_1^0=\tr (\mb \Sigma)+\frac1n\tr (\mb \Sigma)$$ and $$\E T_2^0=n^{-1}\left[\nu_4\tr\(\mb \Sigma\du\mb \Sigma\)+{\tr}^2 (\mb \Sigma)+(n+1)\tr\(\mb \Sigma^2\)\right]-n^{-2}({\tr}^2\mb \Sigma_n+2n\tr\mb \Sigma_n^2),
	$$
and the covariance matrix $\Psi_2$ is given in Theorem \ref{2joint}.
\end{theorem}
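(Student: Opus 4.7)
The strategy is to reduce Theorem \ref{2jointm} to Theorem \ref{2joint} by viewing $\mathbf{B}_n^0$ as a rank-one perturbation of $\mathbf{B}_n$ and showing that the perturbation contributes only deterministic shifts to the expectations while leaving the limiting covariance structure unchanged. Because the centering by $\bar{\mathbf{y}}$ removes the location parameter, I may assume $\boldsymbol{\mu}=0$ throughout. Using the identity $\sum_{j=1}^n(\mathbf{y}_j-\bar{\mathbf{y}})(\mathbf{y}_j-\bar{\mathbf{y}})' = n\mathbf{B}_n - n\bar{\mathbf{y}}\bar{\mathbf{y}}'$, I would start from
$$\mathbf{B}_n^0 = \frac{n}{n-1}\bigl(\mathbf{B}_n - \bar{\mathbf{y}}\bar{\mathbf{y}}'\bigr),$$
which gives
$$T_1^0 = \frac{n}{n-1}\bigl(T_1 - Q_1\bigr), \qquad T_2^0 = \Bigl(\frac{n}{n-1}\Bigr)^2\bigl(T_2 - 2Q_2 + Q_3\bigr),$$
with $Q_1 := \|\bar{\mathbf{y}}\|^2$, $Q_2 := \bar{\mathbf{y}}'\mathbf{B}_n\bar{\mathbf{y}}$, and $Q_3 := \|\bar{\mathbf{y}}\|^4$. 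The rest of the proof is then the analysis of these three random perturbations.

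The first step is to compute $\E Q_1$, $\E Q_2$, $\E Q_3$ in closed form by expanding in the i.i.d.\ components $\mathbf{x}_j$; each is a polynomial in $\tr(\mathbf{\Sigma})$, $\tr(\mathbf{\Sigma}^2)$, and $\tr(\mathbf{\Sigma}\du\mathbf{\Sigma})$. Substituting these together with the expectations from Theorem \ref{2joint} and expanding the factor $n/(n-1)$ reproduces the announced $\E T_1^0$ and $\E T_2^0$. The second step is to bound the centered fluctuations. Since $\bar{\mathbf{y}}$ has covariance $n^{-1}\mathbf{\Sigma}$, a direct fourth-moment calculation yields
$$\mathrm{Var}(Q_1) = O\bigl(n^{-2}\tr(\mathbf{\Sigma}^2) + n^{-2}\nu_4\tr(\mathbf{\Sigma}\du\mathbf{\Sigma})\bigr) = O(\psi_{11}/n),$$
and analogous, considerably longer expansions for the quartic statistics $Q_2$ and $Q_3$ give $\mathrm{Var}(Q_2), \mathrm{Var}(Q_3) = O(\psi_{22}/n)$. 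The moment condition $\gamma\in\{6,16\}$ inherited from Theorem \ref{2joint} is sufficient because $Q_2$ and $Q_3$ are quartic in the $\mathbf{y}_j$'s.

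Combining the two steps yields $T_j^0 - \E T_j^0 = T_j - \E T_j + o_P(\sqrt{\psi_{jj}})$ for $j=1,2$, so Slutsky's theorem together with Theorem \ref{2joint} delivers the joint CLT with the same limiting covariance matrix $\mathbf{\Psi}_2$.

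The main obstacle will be the variance bound for $Q_2 = \bar{\mathbf{y}}'\mathbf{B}_n\bar{\mathbf{y}}$, because $\bar{\mathbf{y}}$ and $\mathbf{B}_n$ are built from the same sample and cannot be decoupled. A leave-one-out decomposition $\mathbf{B}_n = n^{-1}\mathbf{y}_k\mathbf{y}_k' + \mathbf{B}_n^{(k)}$ together with careful bookkeeping of the cross terms is the natural tool, but making the resulting bound sharp enough in the divergent regime $\|\mathbf{\Sigma}\|\simeq p^\alpha$ requires controlling contributions like $\tr(\mathbf{\Sigma}^4)$ and $\tr(\mathbf{\Sigma}\du\mathbf{\Sigma}^2)\tr(\mathbf{\Sigma})$ and showing that they remain dominated by $\psi_{22}$; this is where the strengthened moment condition $\gamma=16$ for $4\alpha+\beta>1$ from Theorem \ref{2joint} will again be used.
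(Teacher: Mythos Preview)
Your approach is essentially the paper's: write $\mathbf{B}_n^0$ as a rank-one perturbation of $\mathbf{B}_n$, compute $\E Q_1,\E Q_2,\E Q_3$ to obtain the mean shifts, show that the variances of the $Q_i$ are of smaller order than the corresponding $\psi_{jj}$, and conclude via Slutsky from Theorem~\ref{2joint}. One simplification relative to your plan: the paper bounds $\mathrm{Var}(Q_2)=\mathrm{Var}(\bar{\mathbf y}'\mathbf B_n\bar{\mathbf y})$ by a direct expansion over index coincidences, obtaining $\mathrm{Var}(Q_2)=O\bigl(n^{-4}\tr^2\mathbf{\Sigma}^2+n^{-3}\tr\mathbf{\Sigma}^4\bigr)$ without leave-one-out and without any separate appeal to the $\gamma=16$ condition.
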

This theorem shows that mean shifts appear in the asymptotic distributions of $T_1^0$ and $T_2^0$ compared with  $T_1$ and $T_2$. This result is consist with \cite{zheng2015substitution}.

\section{Simulation study}\label{sec:sim}
In this section, we conduct some simulations to verify our theoretical results. 
\subsection{Joint distribution of $T_1$ and $T_2$}
Set the parameter pairs $(p,n)$ as $(100,1000),(500,1000)$, parameter pairs $(\alpha,\beta)$ as $(0.2,0.1),(0.2,0.5),(0.5,0.5)$ and $(1,0.2)$.
For given $(p,n,\alpha,\beta)$, let $r_1,r_2,\cdots,r_p \in (0,1)$ be $p$ positive numbers, let $k=\beta p$, define $\Lambda_1=((2+r_1)n^{\alpha},(2+r_2) n^{\alpha},\cdots,(2+r_k) n^{\alpha})'\triangleq (\lambda_1^0,\lambda_2^0,\cdots,\lambda_k^0)'$ and $\Lambda_1=(2r_{k+1},\cdots,2r_p)'\triangleq (\lambda_{k+1}^0,\cdots,\lambda_p^0)'$.
We then generate a population covariance matrix $\bf \Sigma=\bf U \bf \Lambda \bf U'$ where $\bf U$ is a orthogonal matrix and $\bf \Lambda$ is a diagonal matrix whose diagonal entries are $\lambda_1^0,\cdots,\lambda_p^0.$

Now, generate a random matrix ${\bf X}=(x_{ij})_{p\times n}$ whose entries are i.i.d. and $x_{1,1}\sim Gamma(4,0.5)$. Denote $\bf B=\bf \Sigma^{1/2}\bf X\bf X'\bf \Sigma^{1/2}.$ From Theorem 2.1, we know that for large n and p, the joint distribution of $\tr {\bf B}$ and $\tr {\bf B^2}$ should close to normal and thus the distribution of $$TS\triangleq(\tr {\bf B}-\E\tr {\bf B}, \tr {\bf B^2}-\E\tr {\bf B^2})\Psi_2^{-1}(\tr {\bf B}-\E\tr {\bf B}, \tr {\bf B^2}-\E\tr {\bf B^2})'$$
should close to a standard chi-square distribution with degree of freedom 2.
The parameters $\E\tr {\bf B},\E\tr {\bf B^2}$ and $\Sigma$ can be calculated by applying Theorem 2.1.

We repeat 10000 times of generating $\bf X$ and compare the quantiles of the empirical distribution of $TS$ with the quantiles of standard chi-square distribution with degree of freedom 2. The simulation results are presented in the Figure \ref{fig1} and Figure \ref{fig2}.

\begin{figure}
  \includegraphics[width=1\textwidth,height=0.65\textwidth]{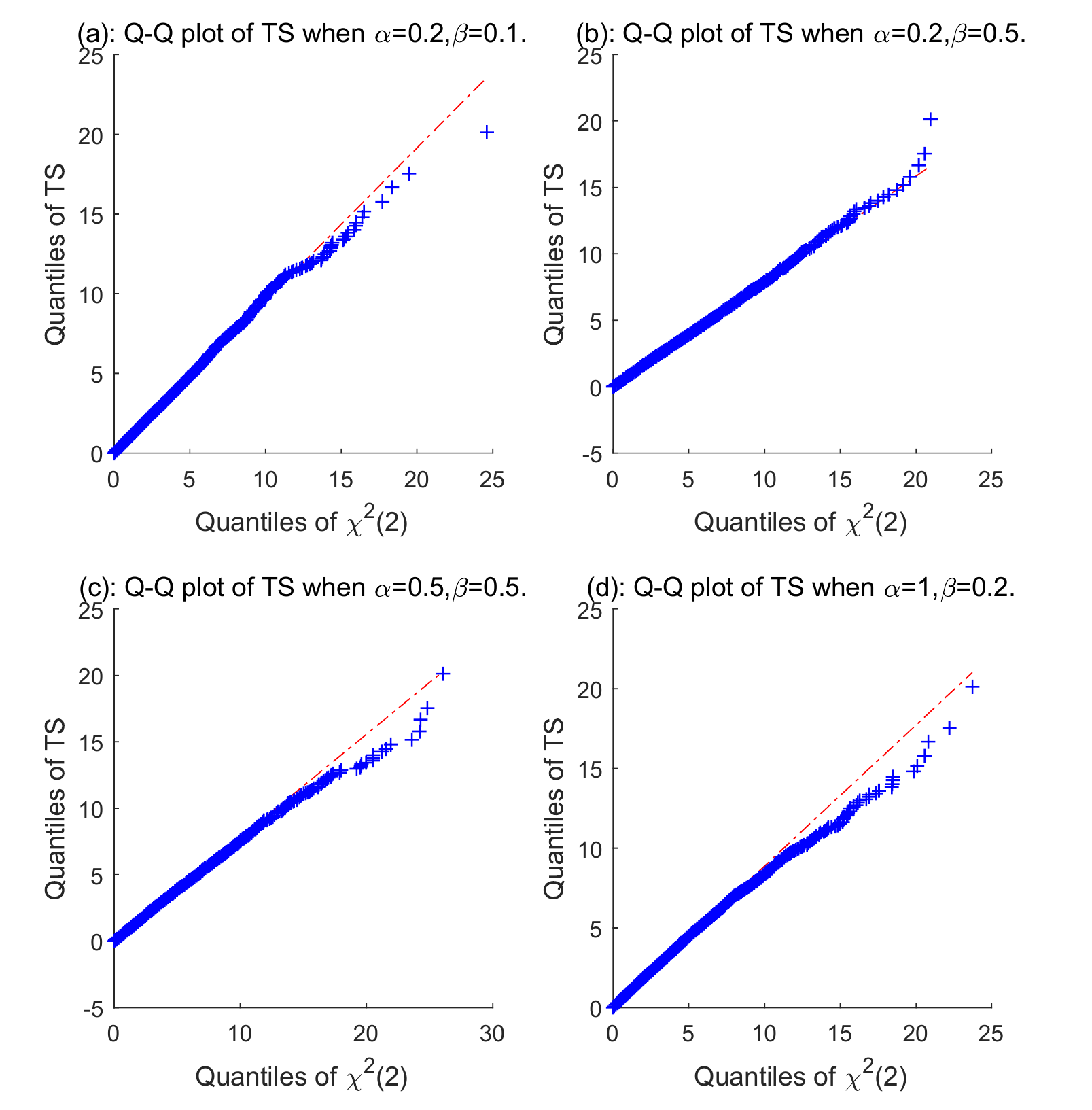}
  \caption{Q-Q plot of the empirical distribution of $TS$ vs the standard chi-square distribution with degree of freedom 2. The dimension $p=100$ and the sample size $n=1000$.}\label{fig1}	
\end{figure}

\begin{figure}
  \includegraphics[width=1\textwidth,height=0.65\textwidth]{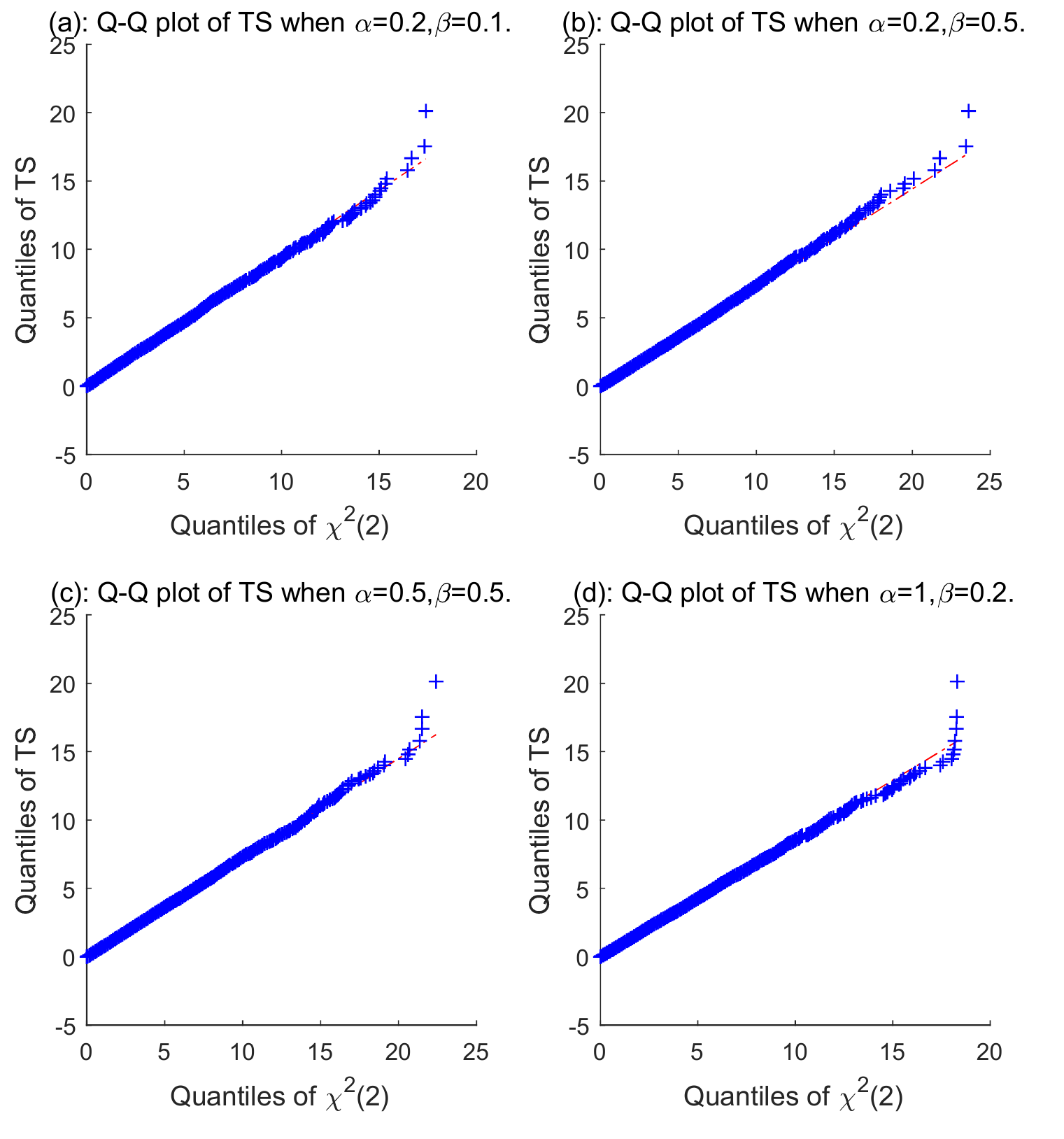}
  \caption{Q-Q plot of the empirical distribution of $TS$ vs the standard chi-square distribution with degree of freedom 2. The dimension $p=500$ and the sample size $n=1000$.}\label{fig2}	
\end{figure}

\section{Proof of Theorem \ref{2joint}}
\label{sec:proofs}
This section is to prove Theorem \ref{2joint}. The main strategy is to use the central limit theorem for martingale given in Lemma \ref{cltmar}. At first, we shall truncate the variables at a proper order when $4\alpha+\beta\leq 1$ and the $\gamma=6$. It is worth to note that the truncation step is no need when $4\alpha+\beta>1$ and the 16-th moment of the underlying distribution exists. Then we calculate the parameters in the mean vector and variance-covariance matrix of $T_1$ and $T_2$. Finally, we finish the proof by verifying the conditions in Lemma \ref{cltmar}. \subsection{Truncation, centralization and rescalling}\label{sec:trun}
In this subsection, we want to truncate the underlying variables at a proper order when $4\alpha+\beta\leq 1$ and the 6-th moment of the underlying distribution exists. 

Let $\widehat{\mb B}_n=\frac{1}{n}\mb \Sigma_n^{1/2}\widehat{\mb X}_n\widehat{\mb X}_n'\mb \Sigma_n^{1/2}$, where $\widehat{\mb X}_n=\(x_{ij}I(|x_{ij}|\leq \delta_nn^{2/\gamma})\)$ with $\delta_n$ be a sequence tend to 0 at an arbitrary slow rate. Denote $\sigma_n^2=\E|\widehat{\mb x}_{11}-\E\widehat{\mb x}_{11}|^2$ and  $\widetilde {\mb X}_n=\sigma_n^{-1}\(x_{ij}I(|x_{ij}|\leq \delta_nn^{2/\gamma})-\E x_{ij}I(|x_{ij}|\leq \delta_nn^{2/\gamma})\).$

Then we have
\begin{align*}
  {\rm P}(\widehat{\mb B_n}\neq\mb B_n,i.o.)=0.
\end{align*}

It follows that
\begin{align*}
	&{\left|\int f_{j}(x) d \widehat{G}_{n}(x)-\int f_{j}(x) d \widetilde{G}_{n}(x)\right|}{\quad \leq K_{j} \sum_{k=1}^{n}\left|\lambda_{k}^{\widehat{\mb B}_{n}}-\lambda_{k}^{\widetilde{\mb B}_{n}}\right|} \\
&{\quad \leq 2 K_{j}\left(n^{-1} \operatorname{tr} \mb \Sigma_n^{1 / 2}\left(\widehat{\mb X}_{n}-\widetilde{\mb X}_{n}\right)\left(\widehat{\mb X}_{n}-\widetilde{\mb X}_{n}\right)^{'} \mb \Sigma_n^{1 / 2}\right)^{1 / 2}\left(n\left(\lambda_{\max }^{\widehat{\mb B}_{n}}+\lambda_{\max }^{\widetilde{\mb B}_{n}}\right)\right)^{1 / 2}}.
\end{align*}
Also,
\begin{align*}
\left|1-\sigma_{n}^{2}\right|\leq& 2 \int_{\left\{\left|x_{11}\right| \geq 2 \delta_{n}n^{2/\gamma}\right\}}\left|x_{11}\right|^{2} \leq 2 \delta_{n}^{2-\gamma} n^{4/\gamma-2} \int_{\left\{\left|x_{11}\right| \geq \delta_{n} n^{2/\gamma}\right\}}\left|x_{11}\right|^{\gamma}\\
=&o\left(\delta_{n}n^{4/\gamma-2}\right),
\end{align*}
and
$$
\left|\E \widehat{x}_{11}\right|=\left|\int_{\left\{\left|x_{11}\right| \geq \delta_{n} \sqrt{n}\right\}} x_{11}\right|=o\left(\delta_{n} n^{2/\gamma-2}\right).
$$
Thus, we arrive at
\begin{align*}
&\left(n^{-1} \operatorname{tr} \mb \Sigma_n^{1 / 2}\left(\widehat{\mb X}_{n}-\widetilde{\mb X}_{n}\right)\left(\widehat{\mb X}_{n}-\widetilde{\mb X}_{n}\right)^{'} \mb \Sigma_n^{1 / 2}\right)^{1 / 2} \\
\leq &\left(n^{-1}\left(1-1 / \sigma_{n}\right)^{2} \operatorname{tr} \widehat{\mb B}_{n}\right)^{1 / 2}+\left(n^{-1}\left\|\mb \Sigma_n\right\| \sigma_n^{-2} \operatorname{tr} \E \widehat{\mb X}_{n} \E \widehat{\mb X}_{n}^{'}\right)^{1 / 2}\\
\leq &\left(\frac{\left(1-\sigma_{n}^{2}\right)^{2}}{\sigma_n^{2}\left(1+\sigma_n\right)^{2}} \frac{p}{n} \lambda_{\max }^{\widehat{B}_{n}}\right)^{1 / 2}+\left(n\left\|\mb \Sigma_n\right\|\right)^{1 / 2} \sigma_n^{-1}\left|\E \widehat{x}_{11}\right| \\
=& o\left(\delta_{n}\|\mb \Sigma_n\|^{1/2} n^{4/\gamma-2}\right)+o\left(\delta_{n}\|\mb \Sigma_n\|^{1 / 2} n^{2/\gamma-3/2}\right).
\end{align*}

We then conclude 
\begin{align*}
	&{\left|\int f_{j}(x) d \widehat{G}_{n}(x)-\int f_{j}(x) d \widetilde{G}_{n}(x)\right|}\\\notag
	=&o\left(\delta_{n}\|\mb \Sigma_n\| n^{4/\gamma-3/2}\right)+o\left(\delta_{n}\|\mb \Sigma_n\| n^{2/\gamma-1}\right)=o\left(\delta_{n}\|\mb \Sigma_n\| n^{2/\gamma-1}\).
	\end{align*}
This reveals that when $\gamma=6$ and $\alpha\leq 1/4$, we shall truncate the variables at $\delta_nn^{1/3}$ without change the asymptotically distribution of LSSs.
\subsection{Mean and Variance of $T_1$ and $T_2$ and their covariance}
After the truncation step, we focus on calculating the parameters appear in the mean vector and variance-covariance matrix.
\subsubsection{Means and Variances of $T_1$ and $T_2$ }
We now compute the mean and variance of the statistic $T_1$ and $T_2$. Firstly, we have
\begin{align*}
 \E T_1=\tr\mb \Sigma_n.
\end{align*}

Denote $\Sigma_n=(t_{i,j})$. By Lemma \ref{lm1}, we have
\begin{align}
\var T_1=n^{-1}\({\nu_4\tr\(\mb \Sigma_n^{\du2}\)+2\tr\mb \Sigma_n^2}\).
\end{align}

Now we deal with the calculation of the mean and variance of $T_2$.
Firstly, it is easy to see that
\begin{align*}
 \E T_2&={n^{-2}}\({\E\tr\(\sum_{i=1}^n\sum_{j=1}^n\mb y_i\mb y_i'\mb y_j\mb y_j'\)}\)\\
 &={n^{-2}}\({\sum_{i=1}^n\E\tr\(\mb y_i\mb y_i'\)\(\mb y_i\mb y_i'\)+\sum_{i\neq j}\tr\E\mb y_i\mb y_i'\E\mb y_j\mb y_j'}\)\\\notag
&=n^{-1}\({{\nu_4\tr\(\mb \Sigma_n^{
		\du2}\)+\(\tr \mb \Sigma_n\)^2+(n+1)\tr\(\mb \Sigma_n^2\)}}\).
\end{align*}

To obtain the variance of $T_2$, we need to calculate the second origin moment of $T_2$. We have
\begin{align}
 \E (T_2)^2&={n^{-4}}\E\tr\(\sum_{i=1}^n\sum_{j=1}^n\mb y_i\mb y_i'\mb y_j\mb y_j'\)\tr\(\sum_{i=1}^n\sum_{j=1}^n\mb y_i\mb y_i'\mb y_j\mb y_j'\)\\\notag
 &={n^{-4}}\E\tr\(\sum_{i=1}^n\sum_{j=1}^n\mb y_i\mb y_i'\mb y_j\mb y_j'\)\tr\(\sum_{k=1}^n\sum_{l=1}^n\mb y_k\mb y_k'\mb y_l\mb y_l'\)\\\notag
 &={n^{-4}}\E\(\sum_{i=1}^n\sum_{j=1}^n\sum_{k=1}^n\sum_{l=1}^n\mb y_i'\mb y_j\mb y_j'\mb y_i\mb y_k'\mb y_l\mb y_l'\mb y_k\)\\\notag
  &={n^{-4}}\E\sum_{i=1}^n\sum_{j=1}^n\sum_{k=1}^n\sum_{l=1}^n\(\mb y_i'\mb y_j\)^2\(\mb y_k'\mb y_l\)^2\\\notag
 &={n^{-4}}\({P_1+P_2+P_3+P_4+P_5}\),
\end{align}
where the term $P_m$ for $1\leq m\leq 5$ are described in the following. We will deal with the five terms one by one.

\paragraph{\bf The calculation of $P_1$:}

$P_1$ contains the summands with the indexes $i=j=k=l$. Thus, we have
\begin{align*}
 P_1&=\E\sum_{i}\(\mb y_i'\mb y_i\)^2\(\mb y_i'\mb y_i\)^2=n\E\(\mb y_1'\mb y_1\)^4.
\end{align*}
Denote below $$\xi_n={\mu_8\tr \mb \Sigma_n^4}+{\mu_6\tr \mb \Sigma_n^3\tr \mb \Sigma_n}+{\mu^2_4\(\tr \mb \Sigma_n^2\)^2},$$ where $\mu_k$ stands for the k-th origin moment of the variables.
It is easy to verify that
$$\E\(\mb y_1'\mb y_1\)^4=\Delta_1+\Delta_2+O(\xi_n),$$
where
\begin{align*}
\Delta_1
=&\E\sum_{i_1\neq i_2\neq i_3\neq i_4}t_{i_1,i_1}t_{i_2,i_2}t_{i_3,i_3}t_{i_4,i_4}x_{i_1}^2x_{i_2}^2x_{i_3}^2x_{i_4}^2\\\notag
=&\sum_{i_1,i_2,i_3,i_4}t_{i_1,i_1}t_{i_2,i_2}t_{i_3,i_3}t_{i_4,i_4}-6\sum_{i_1\neq i_2\neq i_3}t_{i_1,i_1}^2t_{i_2,i_2}t_{i_3,i_3}+O(\xi_n)\\\notag
=&\sum_{i_1,i_2,i_3,i_4}t_{i_1,i_1}t_{i_2,i_2}t_{i_3,i_3}t_{i_4,i_4}-6\sum_{i_1,i_2,i_3}t_{i_1,i_1}^2t_{i_2,i_2}t_{i_3,i_3}+O(\xi_n)\\\notag
=&(\tr\mb \Sigma_n)^4-6(\tr\mb \Sigma_n)^2\tr\(\mb \Sigma_n\du \mb \Sigma_n\)+O(\xi_n).
\end{align*}

\begin{align*}
\Delta_2
=&12\E\sum_{i_1\neq i_2\neq i_3\neq i_4}t_{i_1,i_2}^2t_{i_3,i_3}t_{i_4,i_4}x_{i_1}^2x_{i_2}^2x_{i_3}^2x_{i_4}^2+6\E\sum_{i_1\neq i_3\neq i_4}t_{i_1,i_1}^2t_{i_3,i_3}t_{i_4,i_4}x_{i_1}^4x_{i_3}^2x_{i_4}^2\\\notag
=&12\sum_{i_1\neq i_2\neq i_3\neq i_4}t_{i_1,i_2}^2t_{i_3,i_3}t_{i_4,i_4}+6\mu_4\sum_{i_1\neq i_3\neq i_4}t_{i_1,i_1}^2t_{i_3,i_3}t_{i_4,i_4}\\\notag
=&12\sum_{i_1,i_2,i_3,i_4}t_{i_1,i_2}^2t_{i_3,i_3}t_{i_4,i_4}-12\sum_{i_1,i_3,i_4}t_{i_1,i_1}^2t_{i_3,i_3}t_{i_4,i_4}\\\notag
&\quad+6\mu_4\sum_{i_1,i_3,i_4}t_{i_1,i_1}^2t_{i_3,i_3}t_{i_4,i_4}+O(\xi_n)\\\notag
=&12(\tr\mb \Sigma_n)^2\tr\mb \Sigma_n^2-12(\tr\mb \Sigma_n)^2\tr\(\mb \Sigma_n\du \mb \Sigma_n\)+6\mu_4(\tr\mb \Sigma_n)^2\tr\(\mb \Sigma_n\du \mb \Sigma_n\)+O(\xi_n).
\end{align*}

Thus, we conclude
$$P_1=n\E\(\mb y_1'\mb y_1\)^4=n(\tr\mb \Sigma_n)^4+12n(\tr\mb \Sigma_n)^2\tr\mb \Sigma_n^2+6n\nu_4(\tr\mb \Sigma_n)^2\tr\(\mb \Sigma_n\du \mb \Sigma_n\)+O(n\xi_n).$$

\paragraph{\bf The calculation of $P_2$:}

Next we consider term $P_2$, which contains the summands where there are three indexes in $i,j,k,l$ are coincident and different with the other one. We have
\begin{align*}
 P_2&=4\E\sum_{i\neq j}\(\mb y_i'\mb y_i\)^2\(\mb y_i'\mb y_j\)^2=4n_{(2)}\E\tr\mb \Sigma_n\(\mb y_1\mb y_1'\)^3\\\notag
=&4n_{(2)}\Bigg(\(\tr\mb \Sigma_n\)^2\tr\mb \Sigma_n^2+2\(\tr\mb \Sigma_n^2\)^2+4\tr\mb \Sigma_n\tr\mb \Sigma_n^3+8\tr\mb \Sigma_n^4\\\notag
&+(\mu_4-3)\(2\tr\(\mb \Sigma_n\du\mb \Sigma_n^2\)\tr \mb T+\tr\(\mb \Sigma_n^{\du2}\)\tr\mb \Sigma_n^2\)+(4\mu_4-20)\tr\(\mb \Sigma_n\mb D_{\mb \Sigma_n^2}\mb \Sigma_n\)\\\notag
&+(8\mu_4-16)\tr\(\mb \Sigma_n^2\mb D_{\mb \Sigma_n}\mb \Sigma_n\)+\mu_3^2\(4d_{\mb \Sigma_n}'\mb \Sigma_n d_{\mb \Sigma_n^2}+2d_{\mb \Sigma_n}'\mb \Sigma_n^2d_{\mb \Sigma_n}+4\mb 1'\(\mb \Sigma_n^{\du2}\du\mb \Sigma_n^2\)\mb 1\)\\\notag
&+\(\mu_6-15\mu_4-10\mu_3^2+30\)\tr\(\mb \Sigma_n^{\du2}\du\mb \Sigma_n^2\)\Bigg)\\\notag
 =&4n_{(2)}\Bigg(\(\tr\mb \Sigma_n\)^2\tr\mb \Sigma_n^2+2\(\tr\mb \Sigma_n ^2\)^2+4\tr\mb \Sigma_n\tr\mb \Sigma_n^3\\\notag
&\quad\quad+\nu_4\(2\tr\(\mb \Sigma_n\du\mb \Sigma_n^2\)\tr \mb \Sigma_n+\tr\(\mb \Sigma_n^{\du2}\)\tr\mb \Sigma_n^2\)\Bigg)+O(n^2\tr\mb \Sigma_n^4),
\end{align*}

\paragraph{\bf The calculation of $P_3$:}

$P_3$ is consist of all the summands that the four indexes are pairwise equal.
Denote by $\E_{(1)}$ the conditional expectation given $\mb x_1$. We have by lemma \ref{lm2} that
\begin{align*}
 P_3&=\E\sum_{i\neq j}\tr \(\mb y_i\mb y_i'\mb y_i\mb y_i'\)\tr \(\mb y_j \mb y_j'\mb y_j \mb y_j'\)+2\E\sum_{i\neq j}\tr \(\mb y_i\mb y_i'\mb y_j\mb y_j'\)\tr \(\mb y_i \mb y_i'\mb y_j \mb y_j'\)\\\notag
 &=n_{(2)}\(\E\(\mb y_1'\mb y_1\)^2\)^2+2n_{(2)}\E\(\mb y_1'\mb y_2\)^4\\\notag
&=n_{(2)}\(\E\(\mb y_1'\mb y_1\)^2\)^2+2n_{(2)}\(\E\E_{(1)}\(\(\mb x_2'\mb \Sigma_n\mb x_1\mb x_1'\mb \Sigma_n\mb x_2\)^2-\tr\mb \Sigma_n^2\)^2+\(\tr\mb \Sigma_n^2\)^2\)\\\notag
&=n_{(2)}\(\E\(\mb y_1'\mb y_1\)^2\)^2\\\notag
&+2n_{(2)}\(\E\(\nu_4\tr\(\mb \Sigma_n\mb x_1\mb x_1'\mb \Sigma_n\du\mb \Sigma_n\mb x_1\mb x_1'\mb \Sigma_n\)+2\tr\(\mb \Sigma_n\mb x_1\mb x_1'\mb \Sigma_n\)^2\)+\(\tr\mb \Sigma_n^2\)^2\)\\\notag
&=n_{(2)}\(\E\(\mb y_1'\mb y_1\)^2\)^2\\\notag
&+2n_{(2)}\(\E\nu_4\sum_i\(\sum_j t_{ij}x_{j,1}\)^4+2\E\tr\(\mb \Sigma_n\mb x_1\mb x_1'\mb \Sigma_n\)^2+\(\tr\mb \Sigma_n^2\)^2\)\\\notag
&=n_{(2)}\(\E\(\mb y_1'\mb y_1\)^2\)^2+2n_{(2)}\(5\nu_4\tr\(\mb \Sigma_n^2\du\mb \Sigma_n^2\)+\nu_4^2\tr\(\mb \Sigma_n^{\du2}\)^2+4\tr\mb \Sigma_n^4+3\(\tr\mb \Sigma_n^2\)^2\)\\\notag
&=n_{(2)}\(\nu_4\tr\(\mb \Sigma_n\du\mb \Sigma_n\)+2\tr\mb \Sigma_n^2+\(\tr\mb \Sigma_n\)^2\)^2\\\notag
&\quad+2n_{(2)}\(5\nu_4\tr\(\mb \Sigma_n^2\du\mb \Sigma_n^2\)+\nu_4^2\tr\(\mb \Sigma_n\du\mb \Sigma_n\)^2+4\tr\mb \Sigma_n^4+3\(\tr\mb \Sigma_n^2\)^2\)\\\notag
&=n_{(2)}\Bigg(\(\tr\mb \Sigma_n\)^4+2\nu_4\tr\(\mb \Sigma_n\du\mb \Sigma_n\)\(\tr\mb \Sigma_n\)^2+4\tr\mb \Sigma_n^2\(\tr\mb \Sigma_n\)^2+\nu_4^2\(\tr\(\mb \Sigma_n\du\mb \Sigma_n\)\)^2\\\notag
&\quad\quad+4\(\tr\mb \Sigma_n^2\)^2+4\nu_4\tr\(\mb \Sigma_n\du\mb \Sigma_n\)\tr\mb \Sigma_n^2\Bigg)+6n_{(2)}\(\tr\mb \Sigma_n^2\)^2+O(n^2\tr\mb \Sigma_n^4).
\end{align*}

\paragraph{\bf The calculation of $P_4$:}

The term $P_4$ contains those summands where there are two indexes that are coincident. In this case, we have
\begin{align*}
 P_4&=2\E\sum_{i\neq j\neq k}\tr \(\mb y_i\mb y_i'\mb y_i\mb y_i'\)\tr \(\mb y_j \mb y_j'\mb y_k \mb y_k'\)+4\E\sum_{i\neq j\neq k}\tr \(\mb y_i\mb y_i'\mb y_j\mb y_j'\)\tr \(\mb y_i \mb y_i'\mb y_k \mb y_k'\)\\\notag
 &=2n_{(3)}\tr\mb \Sigma_n^2\E\(\mb y_1'\mb y_1\)^2+4n_{(3)}\E\(\mb y_1'\mb y_2\mb y_2'\mb y_1\mb y_1'\mb y_3\mb y_3'\mb y_1\)\\\notag
&=2n_{(3)}\tr\mb \Sigma_n^2\E\(\mb y_1'\mb y_1\)^2+4n_{(3)}\(\nu_4\tr\(\mb \Sigma_n^2\du\mb \Sigma_n^2\)+2\tr\mb T^4+\(\tr\mb \Sigma_n^2\)^2\)\\\notag
&=2n_{(3)}\tr\mb \Sigma_n^2\(\nu_4\tr\(\mb \Sigma_n\du\mb \Sigma_n\)+2\tr\mb \Sigma_n^2+\(\tr\mb \Sigma_n\)^2\)\\\notag
&\quad\quad+4n_{(3)}\(\nu_4\tr\(\mb \Sigma_n^2\du\mb \Sigma_n^2\)+2\tr\mb \Sigma_n^4+\(\tr\mb \Sigma_n^2\)^2\),
\end{align*}

\paragraph{\bf The calculation of $P_5$:}

The last term $P_5$ is consist of the summands that all the four indexes are different. Here we have
\begin{align*}
 P_5&=\E\sum_{i\neq j\neq k\neq l}\tr \(\mb y_i\mb y_i'\mb y_j\mb y_j'\)\tr \(\mb y_k \mb y_k'\mb y_l \mb y_l'\)\\\notag
 &=n_{(4)}\(\tr\mb \Sigma_n^2\)^2=(n^4-6n^3+11n^2-6n)\(\tr\mb \Sigma_n^2\)^2.
\end{align*}

\paragraph{The conclusion:}
Combining the calculations above, we obtain that
\begin{align*}
&(\E T_2)^2=M_{0,1}+M_{0,2}+M_{0,3}+O(n^{-3}\xi_n+n^{-2}\tr\mb \Sigma_n^4),
\end{align*}
where
\begin{align*}
	M_{0,1}=\frac{(\tr \mb \Sigma_n)^4+n^2\(\tr\mb \Sigma_n^2\)^2+2n\(\tr\mb \Sigma_n^2\)(\tr \mb \Sigma_n)^2}{n^2},
\end{align*}
\begin{align*}
	&M_{0,2}\\
	=&\frac{2\nu_4\tr\(\mb \Sigma_n^{\du2}\)(\tr \mb \Sigma_n)^2+2\nu_4n\tr\(\mb \Sigma_n^{\du2}\)\tr \mb \Sigma_n^2+2\(\tr\mb \Sigma_n^2\)(\tr \mb \Sigma_n)^2+2n\(\tr\mb \Sigma_n^2\)^2}{n^2},
\end{align*}
and
\begin{align*}
	M_{0,3}=\frac{2\nu_4\tr\(\mb \Sigma_n^{\du2}\)\tr \mb \Sigma_n^2+\nu_4^2\(\tr\(\mb \Sigma_n^{\du2}n\)\)^2+\(\tr\mb \Sigma_n^2\)^2}{n^2}.
\end{align*}
Also, we have
\begin{align*}
&\E(L_2)^2=M_1+M_2+M_3+O(n^{-3}\xi_n+n^{-2}\tr\mb \Sigma_n^4),
\end{align*}
where
\begin{align*}
	M_1=\frac{(\tr\mb \Sigma_n)^4+2n(\tr\mb \Sigma_n^2)(\tr\mb \Sigma_n)^2+n^2(\tr\mb \Sigma_n^2)^2}{n^2},
\end{align*}
\begin{align*}
	&M_2\\
	=&\frac{2n^2\tr\mb \Sigma_n^2(\tr\mb \Sigma_n)^2+2\nu_4n^2\tr\(\mb \Sigma_n^{\du2}\)(\tr\mb \Sigma_n)^2+2\nu_4n^3\tr\(\mb \Sigma_n^{\du2}\)\tr\mb \Sigma_n^2+2n^3(\tr\mb \Sigma_n^2)^2}{n^4},
\end{align*}
and
\begin{align*}
	M_3=&n^{-4}\Bigg(8n\tr\mb \Sigma_n^2(\tr\mb \Sigma_n)^2+4\nu_4n(\tr\mb \Sigma_n)^2\tr\(\mb \Sigma_n^{\du2}\)+16n^2\tr\mb \Sigma_n\tr\mb \Sigma_n^3\\\notag
	&+5n^2\(\tr\mb \Sigma_n^2\)^2+8\nu_4n^2\tr\(\mb \Sigma_n\du\mb \Sigma_n^2\)\tr\mb \Sigma_n+4n^2\tr\(\mb \Sigma_n^{\du2}\)\tr\mb \Sigma_n^2\\\notag
	&+\nu_4^2n^2\(\tr(\mb \Sigma_n^{\du2})\)^2-2\nu_4n^2\tr\mb \Sigma_n^2\tr\(\mb \Sigma_n^{\du2}\)+4\nu_4n^3\tr\(\mb \Sigma_n^2\du\mb \Sigma_n^2\)+8n^3\tr\mb \Sigma_n^4\Bigg).
\end{align*}

Then by calculation, we finally arrive at
\begin{align}
\var(L_2)=&\Bigg[n^{-4}\Bigg(8n\tr\mb \Sigma_n^2(\tr\mb \Sigma_n)^2+4\nu_4n(\tr\mb \Sigma_n)^2\tr\(\mb \Sigma_n^{\du2}\)+16n^2\tr\mb \Sigma_n\tr\mb \Sigma_n^3\\\notag
	&+4n^2\(\tr\mb \Sigma_n^2\)^2+8\nu_4n^2\tr\(\mb \Sigma_n\du\mb \Sigma_n^2\)\tr\mb \Sigma_n\\\notag
	&+4\nu_4n^3\tr\(\mb \Sigma_n^2\du\mb \Sigma_n^2\)+8n^3\tr\mb \Sigma_n^4\Bigg)\Bigg](1+o(1)).
\end{align}
We finish the calculation of the means and variances of $T_1$ and $T_2$.
\subsubsection{The covariance of $T_1$ and $T_2$}
Now we compute the covariance of statistics $T_1$ and $T_2$.
Firstly, we have
\begin{align*}
	\E L_1L_2&=n^{-3}\E\(\(\tr\sum_{i=1}^n \mb y_i'\mb y_i\)\tr\(\sum_{j=1}^n\mb y_j'\mb y_j\)^2\)\\\notag
	&=n^{-3}\E\sum_{i,j,k=1}^n\mb y_i'\mb y_i\mb y_j'\mb y_k\mb y_k'\mb y_j:=n^{-3}\(P_1+P_2+P_3\).
\end{align*}
We now process the steps one by one.
\paragraph{\bf Calculation of  $P_1$:}
$P_1$ contains the summands where the three indexes are equal. We obtain from Lemma \ref{lm3} that
\begin{align*}
	P_1=&n\E\(\mb y_1'\mb y_1\)^3=n\E\(\mb x_1'\mb \Sigma_n\mb x_1\)^3\\\notag
	=&n\(\tr\mb \Sigma_n\)^2\tr\mb \Sigma_n +2n\tr\(\mb \Sigma_n\)^2\tr \mb \Sigma_n+4n\tr\(\mb \Sigma_n\)\tr\mb \Sigma_n^2+8n\tr\(\mb \Sigma_n^3\)\\\notag
&+3(\mu_4-3)n\(\tr\(\mb \Sigma_n\du\mb \Sigma_n\)\tr \mb \Sigma_n\)+(4\mu_4-20)n\tr\(\mb \Sigma_n\mb D_{\mb \Sigma_n}\mb \Sigma_n\)\\\notag
&+(8\mu_4-16)n\tr\(\mb \Sigma_n\mb D_{\mb \Sigma_n}\mb \Sigma_n\)\\
&+\mu_3^2n\(4d_{\mb \Sigma_n}'\mb \Sigma_n d_{\mb \Sigma_n}+2d_{\mb \Sigma_n}'\mb \Sigma_n d_{\mb \Sigma_n}+4\mb 1'\(\mb \Sigma_n\du \mb \Sigma_n\du\mb \Sigma_n\)\mb 1\)\\\notag
&+\(\mu_6-15\mu_4-10\mu_3^2+30\)n\tr\(\mb T\du \mb T\du\mb \Sigma_n\)\\\notag
=&n\(\tr\mb \Sigma_n\)^3 +6n\tr\mb \Sigma_n^2\tr \mb \Sigma_n+3\nu_4n\(\tr\(\mb \Sigma_n\du\mb \Sigma_n\)\tr \mb \Sigma_n\)+O(n\tr\mb \Sigma_n^3).
\end{align*}

\paragraph{\bf Calculation of  $P_2$:}
$P_2$ contains the summands where there are two of three indexes are equal. It is easy to see that
\begin{align*}
	P_2=&2\E\sum_{i\neq j}^n\mb y_i'\mb y_i\mb y_i'\mb y_j\mb y_j'\mb y_i+\E\sum_{i\neq j}^n\mb y_i'\mb y_i\mb y_j'\mb y_j\mb y_j'\mb y_j\\\notag
	=&2n_{(2)}\E\(\mb x_1'\mb \Sigma_n\mb x_1\)\(\mb x_1'\mb \Sigma_n^2\mb x_1\)+n_{(2)}\(\tr\mb \Sigma_n\)\E\(\mb x_1'\mb \Sigma_n\mb x_1\)^2\\\notag
	=&2n_{(2)}\(\nu_4\tr\(\mb \Sigma_n\du\mb \Sigma_n^2\)+2\tr\mb \Sigma_n^3+\tr\mb \Sigma_n\tr\mb \Sigma_n^2\)\\\notag
	&+n_{(2)}\tr\mb \Sigma_n\(\nu_4\tr\(\mb \Sigma_n\du\mb \Sigma_n\)+2\tr\mb \Sigma_n^2+\(\tr\mb \Sigma_n\)^2\)\\\notag
	=&n_{(2)}\(2\nu_4\tr\(\mb \Sigma_n\du\mb \Sigma_n^2\)+4\tr\mb \Sigma_n^3+4\tr\mb \Sigma_n\tr\mb \Sigma_n^2+\nu_4\tr\(\mb \Sigma_n^{\du2}\)\tr\mb \Sigma_n+\(\tr\mb \Sigma_n\)^3\)
\end{align*}

\paragraph{\bf Calculation of  $P_3$:}
$P_3$ contains the summands where the three indexes are all different. We have
\begin{align*}
	P_3=n_{(3)}\tr\mb \Sigma_n\tr\mb \Sigma_n^2
\end{align*}

Combining the calculates above, we obtain that
\begin{align*}
	&\E T_1T_2\\
	=&n^{-3}\Bigg(n\(\tr\mb \Sigma_n\)^3 +6n\tr\mb \Sigma_n^2\tr \mb \Sigma_n+3\nu_4n\(\tr\(\mb \Sigma_n^{\du2}\)\tr \mb \Sigma_n\)\\\notag
	&+n_{(2)}\(2\nu_4\tr\(\mb \Sigma_n\du\mb \Sigma_n^2\)+4\tr\mb \Sigma_n^3+4\tr\mb \Sigma_n\tr\mb \Sigma_n^2+\nu_4\tr\(\mb \Sigma_n^{\du2}\)\tr\mb \Sigma_n+\(\tr\mb \Sigma_n\)^3\)\\\notag
	&+(n^3-3n^2+2n)(\tr\mb \Sigma_n)(\tr\mb \Sigma_n^2)+O(n\tr\mb \Sigma_n^3)\Bigg)\\\notag
	=&\Bigg[n^{-3}\Bigg(n^2\(\tr\mb \Sigma_n\)^3 +(n^3-3n^2+8n)\tr\mb \Sigma_n^2\tr \mb \Sigma_n+3\nu_4n\(\tr\(\mb \Sigma_n^{\du2}\)\tr \mb \Sigma_n\)\\\notag
	&+n_{(2)}\(2\nu_4\tr\(\mb \Sigma_n\du\mb \Sigma_n^2\)+4\tr\mb \Sigma_n^3+4\tr\mb \Sigma_n\tr\mb \Sigma_n^2+\nu_4\tr\(\mb \Sigma_n^{\du2}\)\tr\mb \Sigma_n\)\Bigg)\Bigg](1+o(1)).
\end{align*}
Notice that
$$\E T_1\E T_2=\frac{\nu_4\tr\(\mb \Sigma_n^{\du2}\)\tr\mb \Sigma_n+\(\tr\mb \Sigma_n\)^3+n\tr\mb \Sigma_n^2\tr\mb \Sigma_n+\tr\mb \Sigma_n^2\tr\mb \Sigma_n}{n}.$$
Thus, we conclude that
\begin{align}
&{\rm Cov}(T_1,T_2)\\\notag
=&\Bigg[n^{-2}\Bigg(4\tr\mb \Sigma_n^2\tr \mb \Sigma_n+2\nu_4\tr\(\mb \Sigma_n^{\du2}\)\tr \mb \Sigma_n+2\nu_4n\tr\(\mb \Sigma_n\du\mb \Sigma_n^2\)+4n\tr\mb \Sigma_n^3\Bigg)\Bigg](1+o(1)).
\end{align}
Then we complete this part.

\subsection{Complete the proof of Theorem \ref{2joint}}
The main task is to prove that for any $a$ and $b$, $T(a,b)=aT_1+bT_2$ is asymptotically normal. To this end, we shall apply Lemma \ref{cltmar}.
\subsubsection{Martingale difference decomposition}
We first decompose $T(a,b)$ into sum of Martingale difference sequence. Let $\E_k(\cdot)$ denote the condition expectation given $\{\mb x_1,\mb x_2,\cdots,\mb x_k\}$.
We have
\begin{align*}
  \E_k(T_1)=&\frac{1}{n}\E_k\tr\sum_{i=1}^n\mb y_i\mb y_i'=\frac{1}{n}\sum_{i=1}^k\tr\mb y_i\mb y_i'+\frac{n-k}{n}\tr\mb \Sigma_n,
\end{align*}
and
\begin{align*}
  \E_k(T_2)=&\frac{1}{n^2}\E_k\tr\sum_{i=1}^n\sum_{j=1}^n\mb y_i\mb y_i'\mb y_j\mb y_j'\\
      =&\frac{1}{n^2}\E_k\tr\(\sum_{i=1}^k\sum_{j=1}^k+\sum_{i=k+1}^n\sum_{j=k+1}^n+2\sum_{i=1}^k\sum_{j=k+1}^n\)\tr\mb y_i\mb y_i'\mb y_j\mb y_j'\\\notag
=&\frac{1}{n^2}\sum_{i=1}^k\sum_{j=1}^k\tr\mb y_i\mb y_i'\mb y_j\mb y_j'+\frac{n-k}{n^2}\E\(\mb y_1'\mb y_1\)^2\\\notag
&+\frac{(n-k)(n-k-1)}{n^2}\tr\(\mb \Sigma_n^2\)+\frac{2(n-k)}{n^2}\sum_{i=1}^k\tr\mb y_i\mb y_i'\mb \Sigma_n.
\end{align*}

Then we obtain
\begin{align*}
  D_k^{(1)}=&\(\E_k-\E_{k-1}\)T_1=\frac{\mb y_k'\mb y_k}{n}-\frac{\tr \mb \Sigma_n}{n}:=D_{k,1},
\end{align*}
and
\begin{align*}
  D_k^{(2)}=&\(\E_k-\E_{k-1}\)T_2\\\notag
  =&\frac{2}{n^2}\sum_{i=1}^{k-1}\(\tr\(\mb y_i\mb y_i'\mb y_k\mb y_k'\)-\tr\(\mb y_i\mb y_i'\mb \Sigma_n\)\)+\frac{2(n-k)}{n^2}\(\tr\(\mb y_k\mb y_k'\mb \Sigma_n\)-\tr\(\mb \Sigma_n^2\)\)\\\notag
  &+\frac{1}{n^2}\(\tr\(\(\mb y_k\mb y_k'\)^2\)-\E\(\mb y_1\mb y_1'\)^2\)\\\notag
  =&\frac{2}{n^2}\sum_{i=1}^{k-1}\(\mb x_k'\mb \Sigma_n^{1/2}\mb y_i'\mb y_i\mb \Sigma_n^{1/2}\mb x_k-\tr\(\mb y_i\mb y_i'\mb \Sigma_n\)\)+\frac{2(n-k)}{n^2}\(\mb x_k'\mb \Sigma_n^2\mb x_k-\tr\(\mb \Sigma_n^2\)\)\\\notag
  &+\frac{1}{n^2}\(\tr\(\(\mb y_k\mb y_k'\)^2\)-\E\(\mb y_1\mb y_1'\)^2\):=D_{k,2}+D_{k,3}+D_{k,4}.
\end{align*}

Now we obtain
$$D_k=(\E_k-\E_{k-1})T(a,b)=aD_{k,1}+b\(D_{k,2}+D_{k,3}+D_{k,4}\).$$
\subsubsection{The verification of Lindberg condition}
This subsection is to verify the Lindberg condition. For $D_{k,1}$, we have
\begin{align*}
	\sum_{k=1}^n\E\left|D_{k,1}\right|^4&\leq Cn^{-3}\E\left|\mb x_1'\mb \Sigma_n \mb x_1-\tr \mb \Sigma_n\right|^4\leq Cn^{-3}\(\mu_8\tr\mb \Sigma_n^4+\mu_4^2\tr^2\mb \Sigma_n^2\)\\
	&=O(\frac{\tr\mb \Sigma_n^4}{n^2}+\frac{\tr^2\mb \Sigma_n^2}{n^3}).
\end{align*}
For $D_{k,2}$, we have
\begin{align*}
	&\sum_{k=1}^n\E\left|D_{k,2}\right|^4=\sum_{k=1}^n\E\left|\frac{2}{n^2}\sum_{i=1}^{k-1}\(\mb x_k'\mb \Sigma_n^{1/2}\mb y_i'\mb y_i\mb \Sigma_n^{1/2}\mb x_k-\tr\(\mb y_i\mb y_i'\mb \Sigma_n\)\)\right|^4\\\notag
	\leq& Cn^{-3}\E\left|\mb x_1'\mb \Sigma_n^{1/2}\mb y_2'\mb y_2\mb \Sigma_n^{1/2}\mb x_1-\tr\(\mb y_2\mb y_2'\mb \Sigma_n\)\right|^4\\\notag
	\leq& Cn^{-3}\(\mu_8\E\(\mb x_2'\mb \Sigma_n^2\mb x_2-\tr\mb \Sigma_n^2\)^4+\mu_4^2\(\E\(\mb x_2'\mb \Sigma_n^2\mb x_2-\tr\mb \Sigma_n^2\)^2\)^2\)\\\notag
	\leq&Cn^{-3}\(\mu_8\(\mu_8\tr\mb \Sigma_n^8+\mu_4^2\tr^2\mb \Sigma_n^4\)+\mu_4^2\(\mu_4\tr\mb \Sigma_n^4\)^2\)=o(1)+O(\frac{\tr^2\mb \Sigma_n^4}{n^3}),
\end{align*}
where the $o(1)$ is to control the order when $4\alpha+\beta\leq 1$ and $O(\frac{\tr^2\mb \Sigma_n^4}{n^3})$ is to control the order when $4\alpha+\beta>1.$

Then for $D_{k,3}$, we shall obtain
\begin{align*}
	&\sum_{k=1}^n\E\left|D_{k,3}\right|^4=\sum_{k=1}^n\E\left|\frac{2(n-k)}{n^2}\(\mb x_k'\mb \Sigma_n^2\mb x_k-\tr\(\mb \Sigma_n^2\)\)\right|^4\\\notag
	\leq& Cn^{-3}\E\left|\mb x_1'\mb \Sigma_n^2\mb x_1-\tr\(\mb \Sigma_n^2\)\right|^4\leq Cn^{-3}\(\mu_8\tr\mb \Sigma_n^8+\mu_4^2\tr^2\mb \Sigma_n^4\)=o(\frac{\tr^2\mb \Sigma_n^4}{n^2}),
\end{align*}
For $D_{k,4}$, we have from calculation that
\begin{align*}
	&\sum_{k=1}^n\E\left|D_{k,4}\right|^4\\
=&\sum_{k=1}^n\E\left|\frac{1}{n^2}\(\tr\(\(\mb y_k\mb y_k'\)^2\)-\E\(\mb y_1\mb y_1'\)^2\)\right|^4\\\notag
	\leq& Cn^{-7}\E\left|\(\mb y_1'\mb y_1		\)^2-\E\(\mb y_1\mb y_1'\)^2\right|^4\\\notag
	\leq& Cn^{-7}\Bigg(\sum_{1\leq i_1,\cdots,i_6\leq 8,i_1+\cdots+i_6=8}\prod_{\tau=1}^6\mu_{\max\{2i_{\tau}-6,6\}}\prod_{\tau=1}^6\tr\mb \Sigma_n^{i_{\tau}}\Bigg)\\\notag
=&o(1)+o(\frac{\tr^2\mb \Sigma_n^4}{n^2}+\frac{\tr^4\mb \Sigma_n^2}{n^4}+\frac{\tr^2\mb \Sigma_n\tr^2\mb \Sigma_n^3}{n^4}).
\end{align*}
Thus, we arrive at
\begin{align}
  \sum_{k=1}^n\E|D_{k}|^4=o(1+\frac{\tr^2\mb \Sigma_n^4}{n^2}+\frac{\tr^4\mb \Sigma_n^2}{n^4}+\frac{\tr^2\mb \Sigma_n\tr^2\mb \Sigma_n^3}{n^4}).
\end{align}
That is to say, we have $\sum_{k=1}^n\E|D_{k}|^4=o({\rm Var}^2\(T(a,b)\)).$
\subsubsection{Complete the proof of CLT}
The remaining task is to prove that $${\rm Var} \sum_{k=1}^n \E_{k-1}D_k^2=o({\rm Var}^2\(T(a,b)\)).$$ To this end, denote $\mb C_i=\mb \Sigma_n^{1/2}\mb y_i\mb y_i'\mb \Sigma_n^{1/2}$, by Lemma \ref{lm1} we have
\begin{align*}
  &\E_{k-1}D_{k,1}^2=\E_{k-1}\left|\frac{\mb y_k'\mb y_k}{n}-\frac{\tr \mb \Sigma_n}{n}\right|^2=n^{-2}\(\nu_4\tr(\mb \Sigma_n\du \mb \Sigma_n)+2\tr(\mb \Sigma_n^2)\).
\end{align*}
Also, we shall obtain
\begin{align*}
  \E_{k-1}D_{k,2}^2&=\frac{4}{n^4}\sum_{i=1}^{k-1}\sum_{j=1}^{k-1}\E_{k-1}\(\mb x_k'\mb C_i\mb x_k-\tr\mb C_i\)\(\mb x_k'\mb C_j\mb x_k-\tr\mb C_j\)\\\notag
  &=\frac{4}{n^4}\sum_{i=1}^{k-1}\sum_{j=1}^{k-1}\(\nu_4\tr\mb C_i\du \mb C_j+2\tr\mb C_i\mb C_j\),
\end{align*}
and
\begin{align*}
  \E_{k-1}D_{k,3}^2&=\frac{4(n-k)^2}{n^4}\E_{k-1}\(\mb x_k'\mb \Sigma_n^2\mb x_k-\tr\(\mb \Sigma_n^2\)\)^2\\
  &=\frac{4(n-k)^2}{n^4}\(\nu_4\tr\(\mb \Sigma_n^2 \du \mb \Sigma_n^2\)+2\tr\(\mb \Sigma_n^4\)\).
\end{align*}
We also have
\begin{align*}
  &\E_{k-1}D_{k,2}D_{k,3}\\
  =&\frac{4(n-k)}{n^4}\sum_{i=1}^{k-1}\(\mb x_k'\mb \Sigma_n^{1/2}\mb y_i\mb y_i'\mb \Sigma_n^{1/2}\mb x_k-\tr\(\mb y_i\mb y_i'\mb \Sigma_n\)\)\(\mb x_k'\mb \Sigma_n^2\mb x_k-\tr\(\mb \Sigma_n^2\)\)\\\notag
  =&\frac{4(n-k)}{n^4}\sum_{i=1}^{k-1}\(\nu_4\tr\(\mb C_i \du \mb \Sigma_n^2\)+2\tr\(\mb C_i \mb \Sigma_n^2\)\),
\end{align*}
and
\begin{align*}
  &\E_{k-1}D_{k,4}^2=\frac{1}{n^4}\(\E\(\mb y_1'\mb y_1\)^4-\(\E\(\mb y_1'\mb y_1\)^2\)^2\).
\end{align*}
What is more, one have
\begin{align*}
  &\E_{k-1}D_{k,2}D_{k,4}=\frac{2}{n^4}\sum_{i=1}^{k-1}\(\E_{k-1}\tr\(\mb y_i\mb y_i'\(\mb y_k \mb y_k'\)^3\)-\E\(\mb y_1'\mb y_1\)^2\tr\(\mb y_i\mb y_i'\mb \Sigma_n\)\),
\end{align*}
and
\begin{align*}
  &\E_{k-1}D_{k,3}D_{k,4}=\frac{2(n-k)}{n^4}\(\E_{k-1}\tr\(\mb \Sigma_n\(\mb y_k\mb y_k'\)^3\)-\E\(\mb y_1'\mb y_1\)^2\tr\mb \Sigma_n^2\).
\end{align*}
Then, we can get that
\begin{align*}
  &\var\(\sum_{k=1}^n\E_{k-1}D_{k,2}^2\)\\\notag
  =&\var\(\sum_{k=1}^n\frac{4}{n^4}\sum_{i=1}^{k-1}\sum_{j=1}^{k-1}\(\nu_4\tr\mb C_i\du \mb C_j+2\tr\mb C_i\mb C_j\)\)\\\notag
  =&16n^{-8}\E\(\sum_{k=1}^n\sum_{i=1}^{k-1}\sum_{j=1}^{k-1}\nu_4\tr\mb C_i\du \mb C_j+2\tr\mb C_i\mb C_j-\E\(\nu_4\tr\mb C_i\du \mb C_j+2\tr\mb C_i\mb C_j\)\)^2\\\notag
  \leq &Cn^{-6}\E\(\sum_{i=1}^{k-1}\sum_{j=1}^{k-1}\nu_4\tr\mb C_i\du \mb C_j+2\tr\mb C_i\mb C_j-\E\(\nu_4\tr\mb C_i\du \mb C_j+2\tr\mb C_i\mb C_j\)\)^2\\\notag
  \leq &Cn^{-6}\E\sum_{i_1,i_2=1}^{k-1}\sum_{j=1}^{k-1}\(\nu_4\tr\mb C_{i_1}\du \mb C_j+2\tr\mb C_{i_1}\mb C_j-\E\(\nu_4\tr\mb C_{i_1}\du \mb C_j+2\tr\mb C_{i_1}\mb C_j\)\)\\\notag
  &\quad\quad\quad\quad \times \(\nu_4\tr\mb C_{i_2}\du \mb C_j+2\tr\mb C_{i_2}\mb C_j-\E\(\nu_4\tr\mb C_{i_2}\du \mb C_j+2\tr\mb C_{i_2}\mb C_j\)\)\\\notag
  \leq &Cn^{-3}\Bigg(\E^{1/4}\left|\mb x_1'\mb \Sigma_n^{1/2}\mb y_2\mb y_2'\mb \Sigma_n^{1/2}\mb x_1-\tr\(\mb y_2\mb y_2'\mb \Sigma_n\)\right|^4\\
  &\quad\quad\times\E^{1/4}\left|\mb x_1'\mb \Sigma_n^{1/2}\mb y_3\mb y_3'\mb \Sigma_n^{1/2}\mb x_1-\tr\(\mb y_3\mb y_3'\mb \Sigma_n\)\right|^4\\\notag
  &\quad\quad\times \E^{1/2}\left|\mb x_1'\mb \Sigma_n^{1/2}\mb y_4\mb y_4'\mb \Sigma_n^{1/2}\mb x_1-\tr\(\mb y_4\mb y_4'\mb \Sigma_n\)\right|^4\Bigg)\\
=&o(1+\frac{\tr^2\mb \Sigma_n^4}{n^2}+\frac{\tr^4\mb \Sigma_n^2}{n^4}+\frac{\tr^2\mb \Sigma_n\tr^2\mb \Sigma_n^3}{n^4}).
\end{align*}
The estimate of the other terms are the same or simpler thus omitted.

Then we are done.

\section{Proof of Theorem \ref{2jointm}}\label{sec:proof2}
To prove Theorem \ref{2jointm}, we only need to  investigate the effect of the sample mean. Recall that $\bar {\mb y}=\frac1n\sum_{i}\mb y_i,$ we have
$$\mb B_n^0=\frac1n\sum_{i=1}^n\(\mb y_i-\frac1n\sum_{j=1}^n\mb y_j\)\(\mb y_i-\frac1n\sum_{j=1}^n\mb y_j\)'=\frac1n\sum_{i=1}^n\mb y_i\mb y_i'-\bar {\mb y}\bar {\mb y}'=\mb B_n-\bar {\mb y}\bar {\mb y}'.$$
Also, we see that
$T_1^0=T_1-\bar {\mb y}'\bar {\mb y},$ and thus
$\E \bar {\mb y}'\bar {\mb y}=\frac{\tr\mb \Sigma_n}{n}.$ Then, we have
\begin{align*}
\E\(\bar {\mb y}'\bar {\mb y}\)^2
&=\frac{1}{n^4}\E\sum_{i_1,\cdots,i_4}\mb y_{i_1}'\mb y_{i_2}\mb y_{i_3}'\mb y_{i_4}\\\notag
&=\frac{\nu_4\tr(\mb \Sigma_n^{\du2})+\tr \mb \Sigma_n^2+\(\tr\mb \Sigma_n\)^2}{n^3}+\frac{n_{(2)}\(\tr\mb \Sigma_n\)^2}{n^4}+\frac{n_{(2)}\(2\tr\mb \Sigma_n^2\)}{n^4},
\end{align*}
This implies
${\rm Var \(\bar {\mb y}'\bar {\mb y}\)}=\frac{2\tr\mb \Sigma_n^2}{n^2}(1+o(1)).$

Next, we have
\begin{align*}
  T_2^0&=\tr\(\mb B_n^0\)^2=T_2+\(\bar {\mb y}'\bar {\mb y}\)^2-2\bar {\mb y}'\mb B_n\bar {\mb y}.
\end{align*}
From calculation, we obtain
\begin{align*}
  \E \bar {\mb y}'\mb B_n\bar {\mb y}
  =&\frac{\nu_4\tr(\mb \Sigma_n^{\du2})+\tr \mb \Sigma_n^2+\(\tr\mb \Sigma_n\)^2}{n^2}+\frac{n_{(2)}\tr\mb \Sigma_n^2}{n^3}\\\notag
  =&\frac{\nu_4\tr(\mb \Sigma_n^{\du2})+n\tr \mb \Sigma_n^2+\(\tr\mb \Sigma_n\)^2}{n^2}.
\end{align*}
And
\begin{align*}\label{exmple}
  &{\rm Var} \(\bar {\mb y}'\mb B_n\bar {\mb y}\)\\\notag
  =&\E\(\bar {\mb y}'\mb B_n\bar {\mb y}-\E\bar {\mb y}'\mb B_n\bar {\mb y}\)^2 \\\notag
  =&\frac{\E\(\sum_{i_1=1}^n\(\(\mb y_{i_1}'\mb y_{i_1}\)^2-\E\(\mb y_1'\mb y_1\)^2\)+\sum_{i_1\neq i_2}\(\mb y_{i_1}'\mb y_{i_2}\mb y_{i_2}'\mb y_{i_1}-\tr\mb \Sigma_n^2\)\)^2}{n^6}\\\notag
  \leq &{C n^{-6}}\Bigg[n\E\(\(\mb y_{1}'\mb y_{1}\)^2-\E\(\mb y_1'\mb y_1\)^2\)^2+\(n^2\tr^2\mb \Sigma_n^2+n^3\tr\mb \Sigma_n^4+\nu_4n^3\tr\(\mb \Sigma_n^2\du\mb \Sigma_n^2\)\)\Bigg]\\\notag
  \leq&{C n^{-6}}\Bigg[n\(\tr\mb \Sigma_n\)^4+\(n^2\tr^2\mb \Sigma_n^2+n^3\tr\mb \Sigma_n^4+\nu_4n^3\tr\(\mb \Sigma_n^2\du\mb \Sigma_n^2\)\)\Bigg]\\\notag
  \leq &C\(\frac{\tr^2\mb \Sigma_n^2}{n^4}+\frac{\tr\mb \Sigma_n^4}{n^3}\).
\end{align*}
Thus we conclude that
\begin{align*}
  \E (T_2^0-T_2)=\(\frac{-\tr^2\mb \Sigma_n-2n\tr\mb \Sigma_n^2}{n^2}\)(1+o(1)).
\end{align*}
This completes the proof of this theorem.
\section{proof of Theorem \ref{kjoint} by moment method}\label{sec:proof3}
The proof of this theorem is based on the moment method. We note that the original idea appears in \cite{jonsson1982some}. However, the population covariance matrix in \cite{jonsson1982some} is assumed to be identity. As will be seen from the proof below, the extension to the non-null population covariance matrix case is non-trivial and much more efforts have to be made.  

\subsection{Some primary definitions and lemmas}
At first, we note that from the truncation step presented in \ref{sec:trun}, we shall truncate the variable at $\eta_n\sqrt{n}$ where $\eta_n$ convergence to 0 since $\|\mb \Sigma\|=O(\log n)$ and $\gamma >4$.
Recall that \begin{align}
	&T_k=\tr\mb B_n^k=n^{-k}\tr\(\mb X_n'\mb \Sigma\mb X_n\)^k\\\notag
	=&n^{-k}\sum_{i_1,i_2,\cdots,i_{2k},j_1,\cdots,j_k}x_{i_1,j_1}t_{i_1,i_2}x_{i_2,j_2}x_{i_3,j_2}t_{i_3,i_4}x_{i_4,j_3}\cdots x_{i_{2k-1},j_{k}}t_{i_{2k-1},i_{2k}}x_{i_{2k},j_1}.
\end{align}

Denote $\sigma_{\phi,\psi}=\E\(T_{\phi}-\E T_{\phi}\)\(T_{\psi}-\E T_{\psi}\).$
For two sequences $\(i_1,i_2,\cdots,i_{2k}\)$  and  $\(j_1,j_2,\cdots,j_k\)$, $1\leq i_l\leq p$ for all $1\leq l\leq 2k$, $1\leq j_q\leq n$ for all $1\leq q\leq k$, we shall define a $\mb{Q}_k$-$\mb{graph}$ in the following way. Draw two parallel lines, referred to as the $I$-line (upper) and $J$-line (lower), plot $i_1,\cdots,i_{2k}$ on the $I$-line and $j_1,\cdots,j_k$ on the $J$-line, called the $I$-vertices and $J$-vertices respectively. Then draw $k$ down edges from $i_{2l-1}$ to $j_l$, $k$ up edges from $j_l$ to $i_{2l}$ (the $k$ down edges and $k$ up edges are called vertical edges), and $k$ horizontal edges from $i_{2l-1}$ to $i_{2l}$ . Define $V$ to be the set of distinct numbers of $i_1,\cdots,i_{2k}$ and $j_1,\cdots,j_{k}$, $E=\{e_{dl},e_{ul},e_{hl},l=1,\cdots,k\}$, and the function $F(e_{dl})=(i_{2l-1},j_l)$, $F(e_{ul})=(j_{l},i_{2l})$, $F(e_{hl})=(i_{2l},i_{2l-1})$, then $(V,E,F)$ is called a $\mb{Q}_k$-$\mb{graph}$, denoted as $\mb{Q}_k(V,E,F)$. The union of $\mb{Q}_k$ graphs is called a $\mb Q$-$\mb{graph}$ and every $\mb{Q}_k$ graph is called a {\bf basic graph}.

Let $Q$ be a $\mb{Q}$-graph. The sub-graph of $Q$ that containing all $I$-vertices and all horizontal edges is called the $\mb{roof}$ of $Q$, denoted as $R(Q)$. If we remove all horizontal edges from $Q$ and glue all $I$-vertices that connected through horizontal edges, we get the $\mb{pillar}$ of $Q$, denoted as $P(Q)$. If we glue all the coincident vertical edges in $Q$, then we get the $\mb{nerve}$ of $Q$, denoted as $N(Q)$. Of course the nerve is a connected graph.
Two $\mb Q$-graphs are said to be ${\bf isomorphic}$ if one can be converted to the other by a permutation of $(1,\cdots,p)$ and a permutation of $(1,\cdots,n)$. Thus, all the $\mb Q$-graphs are classified into isomorphic groups. We shall choose one from each group as the ${\bf standard}$ $\mb Q$-graph of that group.
A vertical edge in $Q$ is non-coincident with any other edges is called a ${\bf Single}$ edge. A vertical edge in $Q$ that coincident with one and only one edge (ignore the direction) is called a ${\bf double}$ edge. A vertical edge in $Q$ that coincident with more than one edges (its multiplicity is at least 3 ignore the direction) is called a ${\bf multiple}$ edge.

Now, we shall link $T_k$ with $\mb{Q}_k$-{graph}, where the vertical edges correspond to the random variables while the horizontal edges correspond to the entries of the population covariance matrix $\mb \Sigma$.

We now introduce some fundamental lemmas form graph-associated matrix theory.
\begin{lemma}[Theorem A.31 in \citep{BSbook}]\label{mmlm1}
	Suppose that $\mb G=\(\mb V,\mb E, \mb F\)$ is a two-edge connected graph with $t$ vertices and $k$ edges. Each vertex $i$ corresponds to an integer $m_i \geq 2$ and each edge $e_j$ corresponds to a matrix $\mb \Sigma^{(j)}=\(t_{\alpha,\beta}^{(j)}\),\ j=1,\cdots,k$, with consistent dimensions, that is, if $F(e_j)=(f_i(e_j),f_e(e_j))=(g,h),$ then the matrix $\mb \Sigma^{\(j\)}$ has dimensions $m_g\times m_h$. Define $\mb v=(v_1,v_2,\cdots,v_t)$ and
	\begin{align}
		T'=\sum_{\mb v}\prod_{j=1}^kt_{v_{f_i(e_j)},v_{f_e(e_j)}}^{(j)},
	\end{align}
	where the summation $\sum_{\mb v}$ is taken for $v_i=1,2,\cdots, m_i, \ i=1,2,\cdots,t.$ Then for any $i\leq t$,
	we have
	$$|T'|\leq \max_i{m_i}\prod_{j=1}^k\|\mb \Sigma^{(j)}\|.$$
\end{lemma}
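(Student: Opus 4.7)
The plan is to establish this inequality by an \emph{ear decomposition} of the two-edge connected graph $\mb G$, followed by an inductive reduction that absorbs each ear into a matrix-norm bound via the elementary estimate $|\mathbf u'\mathbf A \mathbf B \mathbf v|\leq \|\mathbf A\|\|\mathbf B\|$ for unit vectors.

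First, I would invoke the standard structural fact that every two-edge connected graph admits an ear decomposition $C_0, P_1, \ldots, P_r$, where $C_0$ is a cycle and each $P_s$ is a simple path whose two endpoints lie on $C_0\cup P_1\cup\cdots\cup P_{s-1}$ while all internal vertices are new. Every vertex and edge of $\mb G$ appears exactly once in this decomposition, and the difference $k-t$ equals the number of ears $r$, so $k\geq t$.

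Next I would compute the multi-sum $T'$ by summing out vertices in reverse ear order. Fix an ear $P_s$ with $\ell$ edges labelled $e_{j_1},\ldots,e_{j_\ell}$ (carrying matrices $\mb \Sigma^{(j_1)},\ldots,\mb \Sigma^{(j_\ell)}$) and $\ell-1$ new internal vertices. Performing the inner sum over these internal indices yields exactly one entry of the product $\mb \Sigma^{(j_1)}\mb \Sigma^{(j_2)}\cdots\mb \Sigma^{(j_\ell)}$ at the position indexed by the two endpoints of $P_s$. The entrywise bound
\begin{equation*}
\bigl|\mathbf e_\alpha'\,\mb \Sigma^{(j_1)}\cdots\mb \Sigma^{(j_\ell)}\,\mathbf e_\beta\bigr|\;\leq\;\prod_{q=1}^{\ell}\bigl\|\mb \Sigma^{(j_q)}\bigr\|
\end{equation*}
holds uniformly in $\alpha,\beta$, so this bound may be applied \emph{before} summing against any remaining outer indices, introducing no factor of $m_i$. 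Iterating over $P_r,P_{r-1},\ldots,P_1$ leaves only the sum around the initial cycle $C_0$, which is $\tr\bigl(\mb \Sigma^{(l_1)}\cdots\mb \Sigma^{(l_c)}\bigr)$ with $c$ the length of $C_0$. Using $|\tr\mathbf M|\leq m_*\|\mathbf M\|\leq m_*\prod_q\|\mb \Sigma^{(l_q)}\|$, where $m_*$ is the common dimension along $C_0$, and combining with the per-ear estimates gives the claim once one notes $m_*\leq\max_i m_i$.

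The main obstacle I anticipate is the bookkeeping of dimensions and endpoint identifications: an ear endpoint may have already been identified with an internal vertex of a later ear (still to be summed), so one must phrase the entrywise operator-norm bound so that it is valid \emph{uniformly in the endpoint indices}. This is handled cleanly by observing that $\|\mathbf e_\alpha\|=1$ for every standard basis vector regardless of $\alpha$, so the per-ear bound is independent of the outstanding indices and can be pulled outside all remaining sums. Minor degenerate cases (an ear consisting of a single chord, or $C_0$ being a multi-edge between two vertices) are immediate and require no additional argument.
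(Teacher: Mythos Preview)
The paper does not prove this lemma at all; it is quoted verbatim as Theorem~A.31 of \cite{BSbook} and used as a black box (the paper only proves the subsequent Lemma~\ref{roof}, which builds on it). So there is no ``paper's own proof'' to compare against, and the relevant question is simply whether your outline is correct.

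There is a genuine gap in the iteration step. Suppose you process the last ear $P_r$ and replace the inner sum over its internal vertices by the matrix entry $M_r[v_{a_r},v_{b_r}]$. To ``pull the bound $\lvert M_r[v_{a_r},v_{b_r}]\rvert\le \prod_{q}\|\mb\Sigma^{(j_q)}\|$ outside all remaining sums'' you must pass to absolute values in the surviving summand; what remains is then
\[
\sum_{\text{remaining }\mb v}\ \prod_{j\notin P_r}\bigl|t^{(j)}_{\cdot,\cdot}\bigr|,
\]
i.e.\ the $T'$ associated with $C_0\cup P_1\cup\cdots\cup P_{r-1}$ but with each $\mb\Sigma^{(j)}$ replaced by its entrywise absolute value $|\mb\Sigma^{(j)}|$. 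Your next application of the same argument therefore produces factors $\|\,|\mb\Sigma^{(j)}|\,\|$ rather than $\|\mb\Sigma^{(j)}\|$, and these can be strictly larger (take $\mb\Sigma=\bigl(\begin{smallmatrix}1&1\\1&-1\end{smallmatrix}\bigr)$, for which $\|\mb\Sigma\|=\sqrt2$ but $\|\,|\mb\Sigma|\,\|=2$). So the inductive bound you obtain is weaker than the one claimed.

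If instead you try to avoid absolute values by \emph{contracting} each ear to a single edge carrying the product matrix $M_r$ (so that $T'$ is preserved exactly), a different problem appears: the endpoints $a_r,b_r$ of $P_r$ may be \emph{internal} vertices of some earlier ear $P_s$, $s<r$. After contraction the new chord $e_r$ raises the degree of such a vertex to at least $3$, so when you reach $P_s$ its internal vertices are no longer a bare path and summing over them is not a matrix product. In other words, the iteration does not terminate with ``only the sum around $C_0$''; at best it terminates with $C_0$ plus a collection of chords, and bounding that configuration is exactly where the difficulty lies (naively one is led to Hadamard products, whose operator norms are not submultiplicative). The obstacle you flagged is real, but uniformity of $\|\mb e_\alpha\|=1$ does not resolve it. A correct argument---as in \cite{BSbook}---needs an additional device, for instance reducing first by suppressing all degree-$2$ vertices (which preserves $T'$ exactly) and then exploiting that in the resulting multigraph the last ear of any ear decomposition is necessarily a single edge whose removal keeps the graph two-edge connected, allowing a genuine induction on the number of edges.
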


\begin{lemma}\label{roof}
	Suppose that $\mb G=\(\mb V,\mb E, \mb F\)$ is a graph with $t$ vertices and $k$ edges. Each vertex $i$ corresponds to an integer $m_i \geq 2$ and each edge $e_j$ corresponds to a matrix $\mb \Sigma^{(j)}=\(t_{\alpha,\beta}^{(j)}\),\ j=1,\cdots,k$, with consistent dimensions. Define $\mb v=(v_1,v_2,\cdots,v_t)$ and
	\begin{align}
		T'=\sum_{\mb v}\prod_{j=1}^kt_{v_{f_i(e_j)},v_{f_e(e_j)}}^{(j)},
	\end{align}
	where the summation $\sum_{\mb v}$ is taken for $v_i=1,2,\cdots, m_i, \ i=1,2,\cdots,t$ and subject to the restriction that
	the indicators are different. Then for any $i\leq t$,
	we have
	$$|T'|\leq C_k\(\max_i{m_i}\)^{\max{\{\theta/2,1\}}}\prod_{j=1}^k\|\mb \Sigma^{(j)}\|,$$
	where $\theta$ is the number of vertices of $\mb G=\(\mb V,\mb E, \mb F\)$ that have odd degree and $C_k$ is a constant depend on $k$ only.
\end{lemma}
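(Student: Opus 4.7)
The plan is to reduce the restricted sum $T'$ to Lemma~\ref{mmlm1} via an inclusion--exclusion argument that removes the distinctness constraint, followed by an Euler trail decomposition of the resulting unrestricted sum.

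\textbf{Step 1: Remove the distinctness constraint.} Applying inclusion--exclusion over the set partition lattice of $V$, one writes
$$
T'=\sum_{\pi}c_{\pi}\, T''_{G/\pi},
$$
where $\pi$ ranges over partitions of $V$, $T''_{G/\pi}$ denotes the \emph{unrestricted} sum over the quotient graph $G/\pi$ obtained by identifying vertices in each block of $\pi$, and the integer coefficients $c_\pi$ are bounded in absolute value. Since $|V|\le 2k$, the number of partitions is at most the Bell number $B_{2k}$, a constant depending on $k$ only. Crucially, identifying two vertices can only decrease the count $\theta$ of odd-degree vertices (gluing two odd-degree vertices produces a single even-degree one, reducing $\theta$ by two; the remaining cases preserve $\theta$), and of course leaves the matrix norms $\|\mb\Sigma^{(j)}\|$ unchanged. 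It therefore suffices to prove
$$
|T''_H|\le C'_k(\max_i m_i)^{\max\{\theta(H)/2,\,1\}}\prod_{j=1}^k\|\mb\Sigma^{(j)}\|
$$
for the unrestricted sum over an arbitrary graph $H$.

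\textbf{Step 2: Euler trail decomposition.} Decompose each connected component of $H$ using Euler's trail decomposition theorem: a component whose vertices all have even degree admits a single closed Eulerian trail covering every edge exactly once, whereas a component with $2m>0$ odd-degree vertices decomposes into $m$ edge-disjoint open trails whose endpoints are precisely the odd-degree vertices. Correspondingly, $T''_H$ factorizes across components and within each component reorganizes into a product of two kinds of matrix expressions sharing vertex-indexed sums: traces of matrix products for closed trails, and bilinear forms of the type $\mathbf{1}_{m_u}^\top\mb\Sigma^{(j_1)}\cdots\mb\Sigma^{(j_\ell)}\mathbf{1}_{m_w}$ for open trails with endpoints $u,w$.

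\textbf{Step 3: Bound each piece.} An Eulerian connected graph has no bridges (otherwise removing a bridge would leave a component containing an odd number of odd-degree vertices, violating the handshake lemma), hence is 2-edge-connected, so Lemma~\ref{mmlm1} applies directly and bounds the closed-trail contribution by $(\max_i m_i)\prod_j\|\mb\Sigma^{(j)}\|$. Each open-trail bilinear form is controlled by $\sqrt{m_um_w}\prod_j\|\mb\Sigma^{(j)}\|\le(\max_i m_i)\prod_j\|\mb\Sigma^{(j)}\|$ via $\|\mathbf{1}_{m}\|_2=\sqrt{m}$ and submultiplicativity of the spectral norm. Multiplying the $\theta/2$ open-trail factors (and one extra $\max_i m_i$ when $\theta=0$ to account for the Eulerian case) yields the claimed exponent $\max\{\theta/2,1\}$.

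\textbf{Main obstacle.} The delicate point is handling vertices that appear in the interior of several trails simultaneously (for instance, a shared interior vertex of two open trails, or an interior vertex of a closed trail that is also visited by an open trail). Summing jointly over the labels of such shared vertices does not decompose as a product of independent bilinear forms or traces, so a naive per-trail bound is inadequate. Resolving this requires a careful inductive argument along the block--cut tree of $H$, applying Lemma~\ref{mmlm1} to each 2-edge-connected block and controlling the excess factor contributed by each cut vertex via Cauchy--Schwarz, so that exactly one factor of $\max_i m_i$ is picked up per connected component plus one per open trail. Once this combinatorial bookkeeping is arranged, the stated bound follows.
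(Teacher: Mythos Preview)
Your Step~1 (inclusion--exclusion over set partitions, together with the observation that identifying vertices can only decrease $\theta$) is exactly the mechanism the paper uses at the end of its proof to pass from the distinct-index sum to the unrestricted one. So that part matches.

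For the unrestricted sum, however, the paper takes a much shorter route than your Euler-trail decomposition. Instead of breaking the graph into closed and open trails and bounding each piece separately, the paper simply \emph{augments} the graph: it adds $d=\theta/2$ auxiliary edges pairing up the odd-degree vertices, and assigns to each auxiliary edge the all-ones matrix (whose entries are identically $1$, so the value of the sum is unchanged). After this augmentation every vertex has even degree, so the graph is Eulerian and hence $2$-edge-connected, and Lemma~\ref{mmlm1} applies in one shot to the entire augmented graph. Each all-ones matrix has spectral norm at most $\max_i m_i$, which is where the extra $(\max_i m_i)^{\theta/2}$ factors come from.

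This dummy-edge trick completely sidesteps the ``main obstacle'' you identify: because the graph is never decomposed into separate trails or bilinear forms, there is no interaction between shared interior vertices to worry about, and no block--cut tree or Cauchy--Schwarz bookkeeping is needed. Your approach can probably be pushed through, but it is working against a difficulty that the paper's argument simply does not create.
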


\begin{proof}
	Since the number of vertices that have odd degree must be even, we can use $d=\theta/2$ edges $\{e_{k+1},\cdots,e_{k+d}\}$ to connect them pairwise, and let each edge corresponds to a matrix $\mb \Sigma^{(k+l)}=\(t_{\alpha,\beta}^{(k+l)}\)$ where $t_{\alpha,\beta}^{(k+l)}=1$ for all $\alpha$ and $\beta$, $l=1,\cdots,d$. Then all vertices of the graph have even degrees.
	Notice that a graph with all its vertices degree being even is a circle, of course two-edges connected. By Lemma \ref{mmlm1}, let $T_1'$ denote the summation that has no restriction on the indicators, use the fact that $\|T^{(k+l)}\|\leq \max_i{m_i}$ and they are rank one matrices, we have
	$$|T'|\leq \(m_i\)^{\max{\{\theta/2,1\}}}\prod_{j=1}^k\|\mb \Sigma^{(j)}\|.$$
	Since the number of odd degree vertices will not become larger by glue any vertices together, this proof of this lemma is complete by induction.
\end{proof}

We now consider the mixed moments
$$M(m_1,m_2,\cdots,m_k):=n^{-\gamma}\E\prod_{s=1}^k(T_s-\E T_s)^{m_s},$$ where $m_1,\cdots,m_k$ are non-negative integers with $\sum_{s=1}^km_s=m.$
We set $\gamma=\sum_{s=1}^ksm_s.$
Now, firstly draw two parallel lines and draw $m_1$ $\mb Q_1$-graphs, denoted as $$\mb{Q}_1(V_1,E_1,F_1), \cdots, \mb{Q}_{1}(V_{m_1},E_{m_1},F_{m_1}).$$
Then draw $m_2$ $\mb Q_2$-graphs, denoted as
$$\mb{Q}_{2}(V_{m_1+1},E_{m_1+1},F_{m_1+1}), \cdots, \mb{Q}_{2}(V_{m_1+m_2},E_{m_1+m_2},F_{m_1+m_2}).$$
Continue this process ending with drawing $m_k$ $\mb Q_k$-graphs, denoted as
$$\mb{Q}_k(V_{\sum_{s=1}^{k-1}m_s+1},E_{\sum_{s=1}^{k-1}m_s+1},F_{\sum_{s=1}^{k-1}m_s+1}), \cdots, \mb{Q}_{k}(V_{m},E_{m},F_{m}).$$  We denote the resulting graph, which is a union of m basic graphs, as $$\mb Q\overset{[m_1,m_2,\cdots,m_k]}{\(\cup_{\ell=1}^{m}V_{\ell},\cup_{\ell=1}^{m}E_{\ell},\cup_{\ell=1}^{m}F_{\ell}\)}.$$ Then we find that
\begin{align}
	&M(m_1,m_2,\cdots,m_k)\\\notag
	=&n^{-\gamma}\sum_{(\cdot)}\sum_{(\cdot\cdot)}\E\prod_{\kappa=1}^{k}\prod_{\theta=1}^{m_k}\Bigg(\mb Q_{\kappa}(V_{\sum_{s=1}^{\kappa-1}sm_{s}+\theta},E_{\sum_{s=1}^{\kappa-1}sm_{s}+\theta},F_{\sum_{s=1}^{\kappa-1}sm_{s}+\theta})\\\notag
	&\quad\quad\quad\quad\quad\quad\quad\quad-\E\mb Q_{\kappa}(V_{\sum_{s=1}^{\kappa-1}sm_{s}+\theta},E_{\sum_{s=1}^{\kappa-1}sm_{s}+\theta},F_{\sum_{s=1}^{\kappa-1}sm_{s}+\theta})\Bigg),
\end{align}
where the summation $\sum_{(\cdot)}$ takes over all possibility of different standard $\mb Q$ graphs and the summation $\sum_{(\cdot\cdot)}$ takes over all possibility of $\mb Q$ graphs that isomorphic with a given standard $\mb Q$ graph.

We classify all the possible resulting standard $\mb Q$ graphs into three categories.
\begin{itemize}
	\item[(1):] The resulting $\mb Q$ graph is called a Type I $\mb Q$-graph if there is at least one single vertical edge in the set $\cup_{\ell=1}^{m}E_{\ell}$.
	\item[(2):] The resulting $\mb Q$ graph is called a Type II $\mb Q$-graph if there is no single vertical edge in the set $\cup_{\ell=1}^{m}E_{\ell}$ but there exist $1\leq l \leq m$ satisfying that $E_{l}\cap\(\cup_{\ell\neq l}E_{\ell}\)=\emptyset$.
	\item[(3):] The resulting $\mb Q$ graph is called a Type III $\mb Q$-graph if it does not belong to the former two categories and there are $l_1,l_2$ and $l_3$ satisfying that $V_{l_1}\cap V_{l_2}\cap V_{l_3}\neq\emptyset$.
	\item[(4):] All other possible resulting $\mb Q$ graphs are classified into Type IV.
\end{itemize}

We find the following easy facts.
\begin{itemize}
	\item[Fact.1:] For all Type I $\mb Q$-graphs, we have $\E(\mb Q-\E\mb Q)$ equal to 0 since the variables in $\mb X_n$ are independent and mean 0.
	\item[Fact.2:] For Type II $\mb Q$-graphs, we have $\E(\mb Q-\E\mb Q)$ equal to 0 since there is a $1\leq l \leq \gamma$ satisfying that $E_{l}\cap\(\cup_{\ell\neq l}E_{\ell}\)=\emptyset$ implies that there exists $1\leq \ell\leq k$ satisfying \begin{align}
		&\E\prod_{\kappa=1}^{k}\prod_{\theta=1}^{m_k}\Bigg(\mb Q_{\kappa}(V_{\sum_{s=1}^{\kappa-1}sm_{s}+\theta},E_{\sum_{s=1}^{\kappa-1}sm_{s}+\theta},F_{\sum_{s=1}^{\kappa-1}sm_{s}+\theta})\\\notag
		&\quad\quad\quad\quad\quad\quad\quad\quad-\E\mb Q_{\kappa}(V_{\sum_{s=1}^{\kappa-1}sm_{s}+\theta},E_{\sum_{s=1}^{\kappa-1}sm_{s}+\theta},F_{\sum_{s=1}^{\kappa-1}sm_{s}+\theta})\Bigg)\\\notag
		=&\E\(\mb Q_{\ell}(V_{l},E_{l},F_{l})-\E\mb Q_{\ell}(V_{l},E_{l},F_{l})\)\\\notag
		&\times\E\Bigg(\prod_{\kappa=1}^{k}\prod_{\overset{\theta=1}{\sum_{s=1}^{\kappa-1}sm_{s}+\theta\neq l}}^{m_k}\bigg(\mb Q_{\kappa}(V_{\sum_{s=1}^{\kappa-1}sm_{s}+\theta},E_{\sum_{s=1}^{\kappa-1}sm_{s}+\theta},F_{\sum_{s=1}^{\kappa-1}sm_{s}+\theta})\\\notag &\quad\quad\quad\quad\quad\quad\quad\quad-\E\mb Q_{\kappa}(V_{\sum_{s=1}^{\kappa-1}sm_{s}+\theta},E_{\sum_{s=1}^{\kappa-1}sm_{s}+\theta},F_{\sum_{s=1}^{\kappa-1}sm_{s}+\theta})\bigg)\Bigg)=0.
	\end{align}
	\item[Fact.3:] For type III $\mb Q$-graphs, by  definition, must have less than $m/2$ connected components (sub-graphs).
	\item[Fact.4:] For type IV $\mb Q$-graphs, by definition, the $m$ basic graphs should be connected with each other by vertical edges pairwisely. That is to say, the number of basic graph in a $\mb Q$-graphs of type IV must be even and this $\mb Q$-graphs must have $m/2$ connected components.
\end{itemize}

We have the following lemmas.
\begin{lemma}\label{T3}
	For any given standard type III $\mb Q$-graph $\mb Q\overset{[m_1,m_2,\cdots,m_k]}{\(\cup_{\ell=1}^{m}V_{\ell},\cup_{\ell=1}^{m}E_{\ell},\cup_{\ell=1}^{m}F_{\ell}\)}$, we have
	\begin{align}
		&\sum_{(\cdot\cdot)}\E\prod_{\kappa=1}^{k}\prod_{\theta=1}^{m_k}\Bigg(\mb Q_{\kappa}(V_{\sum_{s=1}^{\kappa-1}sm_{s}+\theta},E_{\sum_{s=1}^{\kappa-1}sm_{s}+\theta},F_{\sum_{s=1}^{\kappa-1}sm_{s}+\theta})\\\notag
		&\quad\quad\quad\quad\quad\quad\quad\quad-\E\mb Q_{\kappa}(V_{\sum_{s=1}^{\kappa-1}sm_{s}+\theta},E_{\sum_{s=1}^{\kappa-1}sm_{s}+\theta},F_{\sum_{s=1}^{\kappa-1}sm_{s}+\theta})\Bigg)=o(n^{\gamma}).
	\end{align}
\end{lemma}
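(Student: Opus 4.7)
The proof will proceed by a degrees-of-freedom argument that converts the qualitative three-way-coincidence property of Fact 3 into a quantitative loss of at least one power of $n$ in the final bound. Fix a standard Type III graph $\mb Q$. Since every distinct vertical edge has multiplicity at least two (otherwise $\mb Q$ would be Type I) and the truncation in Section~\ref{sec:trun} renders all relevant moments of the entries of $\mb X_n$ uniformly bounded (or at worst of order a power of $\delta_n\sqrt{n}$), the expectation inside $\sum_{(\cdot\cdot)}$ factorizes, up to a bounded combinatorial constant, into a product over the distinct horizontal edges of entries of $\mb\Sigma$. The task therefore reduces to bounding $\sum_{(\cdot\cdot)}\bigl|\prod t_{\cdot,\cdot}\bigr|$ over all index assignments isomorphic to $\mb Q$.

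Next, I would decompose $\cup V_\ell$ into its connected components, declaring two basic graphs to be in the same component whenever they share a vertex. Fact 3 supplies the strict inequality $c<m/2$ for the number $c$ of components, as opposed to the equality $c=m/2$ that Fact 4 attaches to the Type IV configurations generating the Gaussian main term of order $n^\gamma$. Inside each component, I apply Lemma \ref{roof} to the horizontal subgraph on the $I$-vertices, which bounds that portion of the sum by a factor $p^{\theta/2}\|\mb\Sigma\|^{r}$, with $r$ the number of horizontal edges of the component and $\theta$ the count of odd-degree $I$-vertices; the $J$-vertices in each component contribute a factor of at most $n$ each. The strict inequality $c<m/2$ then forces the total number of admissible index assignments to be bounded by $n^{\gamma-1}$ times sub-polynomial factors in $p$, and Assumptions A and C yield $\|\mb\Sigma\|^\gamma=O((\log p)^\gamma)=o(n^\delta)$ for every $\delta>0$. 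Multiplying these estimates through delivers the desired $o(n^\gamma)$ bound.

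The principal obstacle is establishing the implication ``$c<m/2$ yields a loss of at least one full $n$-factor'' uniformly across all admissible multiplicity patterns rather than configuration by configuration. Two complications arise: when some vertical-edge cluster has multiplicity strictly greater than two, the truncated moments introduce additional factors of $(\delta_n\sqrt{n})^{\cdot}$ that must be weighed against the extra index constraints they impose; and when the roof inside a component fails to be two-edge connected, the $p^{\theta/2}$ term coming from Lemma \ref{roof} has to be tracked carefully so that it never consumes the $n$-factor gained from Fact~3. Producing a single clean vertex-counting inequality that covers all these cases and absorbs the worst-case combination of norm and truncation factors is where the main technical effort lies.
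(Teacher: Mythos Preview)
Your scaffold matches the paper's: decompose $\mb Q$ into connected components, bound each component separately using Lemma~\ref{roof} on its roof, and exploit Assumption~C to absorb the $\|\mb\Sigma\|^\gamma$ factor. Where you diverge from the paper is in the mechanism that produces the $n$-saving. You attribute it to the global inequality $c<m/2$ from Fact~3, but this is not the right lever: the number of components by itself does not control the index count. The paper instead works component by component and shows that a component $\mb Q_\ell$ built from exactly two basic graphs contributes $O(\|\mb\Sigma\|^{e_\ell}n^{e_\ell})$, while a component containing three or more basic graphs contributes $O(\|\mb\Sigma\|^{e_\ell}n^{e_\ell-2\varepsilon})$. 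Since a Type~III graph necessarily contains at least one component of the second kind, the product is $o(n^{\gamma})$.

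The device you are missing---what you flagged as the ``principal obstacle''---is the paper's introduction of the \emph{nerve} $N(\mb Q_\ell)$ (collapse coincident vertical edges) and a cycle count on it. Each independent cycle in $N(\mb Q_\ell)$ forces the vertex count $v_\ell$ of the pillar down by one relative to $e_\ell$. A component with two basic graphs joined through coincident vertical edges carries at least one cycle in its nerve, giving $v_\ell\le e_\ell$; a component with three or more basic graphs carries at least two cycles (or an equivalent multiple-edge constraint), giving $v_\ell\le e_\ell-1$. The paper then runs a finite case analysis on the number $t_\ell$ of multiple vertical edges to check that the truncated moment factors $(\delta_n\sqrt n)^{\cdot}$ never overwhelm the index savings, and that the odd-degree count $\theta$ in Lemma~\ref{roof} stays under control (odd-multiplicity vertical edges come in pairs with a common $J$-vertex and horizontally linked $I$-vertices). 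Your proposal names both complications but does not supply this cycle/nerve argument; without it the inequality you need does not follow from $c<m/2$ alone.
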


\begin{proof}
	Notice first that for a Type III $\mb Q$-graph, denoted as $\mb Q$, by definition the degree of any vertex is at least two. For simplification, we denote \begin{align}
		Sum(\mb Q)=&\sum_{(\cdot\cdot)}\E\prod_{\kappa=1}^{k}\prod_{\theta=1}^{m_k}\Bigg(\mb Q_{\kappa}(V_{\sum_{s=1}^{\kappa-1}sm_{s}+\theta},E_{\sum_{s=1}^{\kappa-1}sm_{s}+\theta},F_{\sum_{s=1}^{\kappa-1}sm_{s}+\theta})\\\notag
		&\quad\quad\quad\quad\quad\quad\quad\quad-\E\mb Q_{\kappa}(V_{\sum_{s=1}^{\kappa-1}sm_{s}+\theta},E_{\sum_{s=1}^{\kappa-1}sm_{s}+\theta},F_{\sum_{s=1}^{\kappa-1}sm_{s}+\theta})\Bigg).
	\end{align}
	Denote the pillar of $\mb Q$ as $P(\mb Q)$ and its roof as $R(\mb Q)$. Let $c$ be the number of connected components in $\mb Q$. By Fact.3 we have $c\leq m/2$. Denote the $c$ components as $\mb Q_1,\cdots,\mb Q_c$ and denote their pillars and roofs as $P(\mb Q_1),\cdots, P(\mb Q_c)$ and $R(\mb Q_1),\cdots, R(\mb Q_c)$ respectively. Let $v_{\ell}$ be the number of non-coincident vertices in $P(\mb Q_{\ell})$ for $1\leq \ell\leq c$. Let $e_{\ell}$ be the number of horizontal edges in $P(\mb Q_{\ell})$ for $1\leq \ell\leq c$. Of course, the number of vertical edges should be $2e_{\ell}$ for $1\leq \ell\leq c$. Denote the number of non-coincident multiple edges in  $P(\mb Q_{\ell})$ for $1\leq \ell\leq c$ as $t_{\ell}$.
	
	Since unconnected means there is no common vertex and edge, we shall consider the contributions of each component separately and the whole contribution is the products of all parts.
	Now without losing of generality, we focus on $\mb Q_1$.
	Noting that all the basic graphs are connected graph, and that $\mb Q_1$ consists of at least two basic graphs connected with each other by vertical edges.
	We claim that the contribution of all graphs isomorphic with $\mb Q_1$ is at most $O(\|\mb \Sigma\|^{e_1}n^{e_1})$ if there are two and only two basic graphs in $\mb Q_1$ and $O(\|\mb \Sigma\|^{e_1}n^{e_1-2\varepsilon})$ if there are more than two basic graphs in $\mb Q_1$. In fact, we have the following estimates.
	\begin{itemize}
		\item[(1):]  When there are two and only two basic graphs in $\mb Q_1$. We have the following arguments.
		\begin{itemize}
			\item[(a1):] If $t_1=0$, then all the vertical edges are double edge thus the expectation of the absolute value of the random part can be bounded by $C_{m,k}$, a constant only depend on $k$ and $m$ since the number of terms in the random part depends only on $k$ and $m$ and the expectation of absolute value of any term can be bounded by 1. Also, in a type III $\mb Q$-graph, denoted as $\mb Q$, there must at least a circle in any connected component of its nerve. To see this, we first notice that all basic graph is a circle. And in any connected component of $\mb Q$, there are at least two basic graphs have coincident vertical edges (by definition). The two horizontal edges that connected with these two coincident vertical edges must connect with some different vertical edges and back to the common $J$ vertex. Thus, we have $v_1\leq e_1$ in this situation. Combining with Lemma \ref{roof}, this implies that the contribution of all isomorphic graphs $Sum(\mb Q_1)=O(\|\mb \Sigma\|^{e_1}n^{e_1})$ since every connected component of $R(\mb Q_1)$ is an Euler graph thus must be two edge connected.
			\item[(b1):] If $t_1=1$, then the multiplicity of these coincident edges, denoted as $t_{1}^{(1)}$, must be an even number not smaller than 4. Thus every connected component of $R(\mb Q_1)$ is also an Euler graph thus must be two edge connected. On one hand, the expectation of random parts can be bounded by $C_{m,k}\mu_4(\eta_n\sqrt{n})^{t_1^{(1)}-4}$. On the other hand $v_1$ is at most $e_1-(t_1^{(1)}-4)/2$ since $v_1\leq (2e_1-t_1^{(1)})/2+2$. Thus we have $Sum(\mb Q_1)=O(\|\mb \Sigma\|^{e_1}n^{e_1}).$ We also get from here that the every appearance of vertical edge with multiplicity be even and not smaller than 4 will not increase  the  order of the bound on $Sum(\mb Q_1).$
			\item[(c1):] If $t_1=2$,  denote the multiplicities of these coincident edges as $t_{1}^{(1)}$ and $t_{1}^{(2)}$. When both $t_{1}^{(1)}$ and $t_{1}^{(2)}$ are even, we have $Sum(\mb Q_1)=O(\|\mb \Sigma\|^{e_1}n^{e_1})$ as illustrated in $(b).$
			When both $t_{1}^{(1)}$ and $t_{1}^{(2)}$ are odd, there are two cases. (1): $t_{1}^{(1)}=t_{1}^{(2)}=3$ and (2): one of them equal to 3 and the other one equal or larger than 5. Under case (1), we have the expectation of random parts can be bounded by $C_{m,k}\mu_4^2$ and $v_l\leq e_l$. On the other hand, the $J$ vertices of these coincident edges must coincident and their $I$ vertices must be connected by horizontal edges, for otherwise there must be other multiple edge with odd multiplicity. Notice that the number of vertices that has odd degree in $R(\mb Q)$ is 2, and $v_1\leq e_l-1$ since there is at least a circle in $N(\mb Q_1)$. Then we obtain by Lemma \ref{roof} that $$Sum(\mb Q_1)=p\times O(\|\mb \Sigma\|^{e_1}n^{e_1-1})=O(\|\mb \Sigma\|^{e_1}n^{e_1}).$$ Under case (2), we assume $t_{1}^{(1)}=3$ without loss of generality. We have the expectation of random parts can be bounded by $\mu_4(\eta_n\sqrt{n})^{t_1^{(2)}-4}=O(\sqrt{n})^{t_1^{(2)}-4}$ and $v_l\leq e_1-(t_1^{(2)}-5)/2-1$ by the same argument. Thus, we also have
			$$Sum(\mb Q_1)=p\times o(\|\mb \Sigma\|^{e_1}n^{e_1-1})=O(\|\mb \Sigma\|^{e_1}n^{e_1}).$$ We also get from here that the every appearance of two non-coincident vertical edges with odd multiplicities will not increase  the  order of the bound on $Sum(\mb Q_1).$
		\end{itemize}
		\item[(2):]  When there are more than two basic graphs in $\mb Q_1$. Obviously, there must be three basic graphs connected by their vertical edges (i.e. any two of them have coincident edges), or there are two different pairs of basic graphs connected by their vertical edges. We have the following arguments.
		\begin{itemize}
			\item[(a2):] If $t_1=0$ and there are two different pairs of basic graphs connected by their vertical edges, for the same reason with (a1) we know there are at least two different circles in $N(\mb Q_1)$. If $t_1=0$ and there are three basic graphs connected by their vertical edges, they must be connected by at least two non-coincident vertical edges since there is no multiple edge. Then there are also at least two different circles in $N(\mb Q_1)$. Thus, we have $v_1\leq e_1-1$ and $Sum(\mb Q_1)=O(\|\mb \Sigma\|^{e_1}n^{e_1-1}).$
			\item[(b2):] If $t_1>0$ and there are two different pairs of basic graphs connected by their vertical edges, we have from (a2), (a3) and (b1) that $Sum(\mb Q_1)=O(\|\mb \Sigma\|^{e_1}n^{e_1-1}).$
			\item[(c2):] If $t_1>0$ and there are three basic graphs connected by their vertical edges, and they are connected by at least two non-coincident vertical edges. We have by (a2), (a3) and (b1) that $Sum(\mb Q_1)=O(\|\mb \Sigma\|^{e_1}n^{e_1-1})$ since there are at least two different circles in $N(\mb Q_1)$ and thus $v_1\leq e_1-1$. Otherwise, either there will be a multiple edge and there is at least a circle in $N(\mb Q)$, or there will be a vertical edge with multiplicity not smaller than 6. Thus, we have  $Sum(\mb Q_1)=O(\|\mb \Sigma\|^{e_1}n^{e_1-2\varepsilon}).$
		\end{itemize}
	\end{itemize}
	Combining the argument (a1)-(c1) and (a2) and (c2) and notice that the non-coincident vertical edges with odd multiplicities will always appear pairwise, we complete the proof of the claim.
	
	Thus, we arrive at
	\begin{align}
		Sum(\mb Q)=\prod_{\ell=1}^c Sum (\mb Q_{\ell})=O(\|\mb \Sigma\|^{\gamma}n^{\gamma-2\varepsilon})=O(n^{\gamma-\varepsilon}).
	\end{align}
\end{proof}

\begin{lemma}\label{T4}
	For given $m_1,\cdots,m_k$, we have
	\begin{align}
		&n^{-\gamma}\sum_{(IV)}\sum_{(\cdot\cdot)}\E\prod_{\kappa=1}^{k}\prod_{\theta=1}^{m_k}\Bigg(\mb Q_{\kappa}(V_{\sum_{s=1}^{\kappa-1}sm_{s}+\theta},E_{\sum_{s=1}^{\kappa-1}sm_{s}+\theta},F_{\sum_{s=1}^{\kappa-1}sm_{s}+\theta})\\\notag
		&\quad\quad\quad\quad\quad\quad\quad\quad-\E\mb Q_{\kappa}(V_{\sum_{s=1}^{\kappa-1}sm_{s}+\theta},E_{\sum_{s=1}^{\kappa-1}sm_{s}+\theta},F_{\sum_{s=1}^{\kappa-1}sm_{s}+\theta})\Bigg)\\\notag
		=&{\mathbb MN}[\mb 0,{\mb\Lambda}](m_1,\cdots,m_k)+o(1),
	\end{align}
	where $\sum_{(IV)}$ takes all possible standard Type IV $\mb Q$-graph and ${\mathbb M}(m_1,\cdots,m_k)$ is the mixed moment of a certain multivariate normal distribution with zero mean vector and covariance matrix $\mb \Lambda=\(\sigma_{\phi,\psi}\)_{k\times k}$.
\end{lemma}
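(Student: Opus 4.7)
\textbf{Proof plan for Lemma \ref{T4}.} The strategy is to show that the Type IV contribution reproduces exactly the Isserlis/Wick expansion of a centered Gaussian vector with covariance $\mb\Lambda=(\sigma_{\phi,\psi})$, by exploiting the rigid pairing structure that the Type IV definition forces on the basic graphs.

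First, I would establish this pairing structure. Since no vertical edge is single (Type I is excluded) and no basic graph is edge-disjoint from the rest (Type II is excluded), every basic graph must share at least one vertical edge with another basic graph. Combined with the exclusion of three basic graphs sharing any vertex (Type III is excluded), a direct counting argument shows that the $m=\sum_s m_s$ basic graphs split into exactly $m/2$ connected components, each component consisting of precisely two basic graphs glued along their coincident vertical edges. In particular, the Type IV sum is empty, hence $0$, whenever $m$ is odd, and when $m$ is even it is indexed by perfect matchings $\pi$ of $\{1,\dots,m\}$ together with an internal gluing within each matched pair.

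Second, I would invoke independence across components: because distinct components share no indices in $\mb X_n$, both the expectation and the sum $\sum_{(\cdot\cdot)}$ factorize over the $m/2$ matched pairs. For a pair consisting of a $\mb Q_\phi$-graph and a $\mb Q_\psi$-graph the resulting factor is $n^{-(\phi+\psi)}\Xi_n(\phi,\psi)$, where $\Xi_n(\phi,\psi)$ collects all connected two-basic-graph Type IV configurations. I would then identify this per-pair factor with the covariance $\sigma_{\phi,\psi}$ by applying the Type I--IV classification to the trivial case $m=2$: writing $\sigma_{\phi,\psi}=n^{-(\phi+\psi)}\E(T_\phi-\E T_\phi)(T_\psi-\E T_\psi)$ and expanding, Types I and II vanish by Facts 1--2, Type III contributes $o(1)$ by Lemma \ref{T3}, so only Type IV survives and yields $n^{-(\phi+\psi)}\Xi_n(\phi,\psi)=\sigma_{\phi,\psi}+o(1)$. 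Combining the per-pair evaluations and summing over matchings gives
\[
n^{-\gamma}\!\!\sum_{(IV)}\sum_{(\cdot\cdot)}\E\!\prod_{\ell=1}^{m}\bigl(\mb Q_\ell-\E\mb Q_\ell\bigr)
\;=\;\sum_{\pi}\prod_{\{a,b\}\in\pi}\sigma_{\phi_a,\phi_b}+o(1),
\]
which is precisely $\mathbb{MN}[\mb 0,\mb\Lambda](m_1,\dots,m_k)$ by Isserlis' theorem. Since the number of matchings depends only on $m$, the aggregate error remains $o(1)$.

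The main obstacle will be justifying uniformity of these $o(1)$ terms under the divergent spectral norm regime. Within each matched pair one must control the non-leading configurations (vertical edges of multiplicity $\ge 4$, extra circles in the nerve, and degenerate horizontal-edge patterns) via Lemma \ref{roof}; the factors $\|\mb\Sigma\|^{e_\ell}$ generated by horizontal edges must be absorbed against the normalization $n^{\gamma}$ together with the truncation level $\eta_n\sqrt{n}$. This is exactly where Assumption C ($\max_j\tau_j=O(\log p)$) and the moment condition $\gamma>4$ enter decisively, ensuring that the leading two-basic-graph Type IV term indeed dominates and that the reduction to the covariance $\sigma_{\phi,\psi}$ is valid in the limit.
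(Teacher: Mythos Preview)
Your overall architecture---pairing structure, identification of each pair with a covariance, then Isserlis/Wick---matches the paper's. However, there is a genuine gap in your second step. You write that ``both the expectation and the sum $\sum_{(\cdot\cdot)}$ factorize over the $m/2$ matched pairs.'' The expectation does factorize (distinct components use independent entries of $\mb X_n$), but the sum $\sum_{(\cdot\cdot)}$ does \emph{not}: for a fixed Type~IV standard graph this sum ranges over labelings in which vertices in different components receive \emph{distinct} $I$- and $J$-indices. When you replace it by the product $\prod_s\Xi_n(\phi_s,\psi_s)$ you are implicitly allowing indices in different pairs to coincide, which creates additional terms. These overlap terms are precisely what must be shown to be $o(n^{\gamma})$, and you never address them; your ``main obstacle'' paragraph only discusses non-leading configurations \emph{within} a single pair.

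The paper handles this in the opposite direction: it starts from $\prod_s\sigma_{l_{s,1},l_{s,2}}$, observes that this product contains \emph{more} terms than the Type~IV sum for the given pairing $G$, and then argues that every additional term corresponds to a graph in which different pairs share indices---forcing the full $m$-graph to be Type~III---so Lemma~\ref{T3} kills them. Note also that your invocation of Lemma~\ref{T3} at the $m=2$ stage is misplaced: with only two basic graphs Type~III is vacuous, so the identification $n^{-(\phi+\psi)}\Xi_n(\phi,\psi)=\sigma_{\phi,\psi}$ is exact, not merely up to $o(1)$. The correct (and indispensable) use of Lemma~\ref{T3} is to dispose of the cross-pair overlap terms arising when you pass from the constrained Type~IV sum to the free product $\prod_s\sigma_{l_{s,1},l_{s,2}}$.
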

\begin{proof}
	By Fact.4, in a Type IV graph, the $\gamma$ basic graphs should be connected by vertical edges pair-wisely. Also, for any $1\leq \phi,\psi\leq k$, we have that $$\sum_{(g(\phi,\psi))}\sum_{(\cdot\cdot)}\E\(\mb{Q}_{\phi}(V_{\theta},E_{\theta},F_{\theta})-\E\mb{Q}_{\phi}(V_{\theta},E_{\theta},F_{\theta})\)\(\mb{Q}_{\psi}(V_{\omega},E_{\omega},F_{\omega})-\E\mb{Q}_{\psi}(V_{\omega},E_{\omega},F_{\omega})\),$$
	where $\sum_{(g(\phi,\psi))}$ takes all possible standard Type IV $\mb Q$-graph for $\mb Q'=\mb Q_{\phi}\cup \mb Q_{\psi}$, $\sum_{s=1}^{\phi-1}sm_{s}\leq \theta\leq \sum_{s=1}^{\phi}sm_{s}$ and $\sum_{s=1}^{\psi-1}sm_{s}\leq \omega\leq \sum_{s=1}^{\psi}sm_{s}$, is the same with $$\sigma_{\phi,\psi}=\E\(L_{\phi}-\E L_{\phi}\)\(L_{\psi}-\E L_{\psi}\).$$
	Suppose in a standard Type IV graph $\mb Q$, the $\gamma$ basic graphs, denoted for simplicity as $\mb Q_1^{1}, \mb Q_1^{2},\cdots, \mb Q_1^{m_1},\cdots, \mb Q_k^{m_k}$ are grouped into $\gamma/2$ pairs $$G:=\{\(\mb Q_{l_1,1}^{\ell_1,1},\mb Q_{l_1,2}^{\ell_1,2}\),\(\mb Q_{l_2,1}^{\ell_2,1},\mb Q_{l_2,2}^{\ell_2,2}\),\cdots, \(\mb Q_{l_{\gamma/2},1}^{\ell_{\gamma/2},1},\mb Q_{l_{\gamma/2},2}^{\ell_{\gamma/2},2}\)\}.$$
	Comparing
	\begin{align}
		&\sum_{(g(G))}\sum_{(\cdot\cdot)}\prod_{s=1}^{\gamma/2}\E\(\mb Q_{l_{s,1}}^{\ell_{s,1}}-\E\mb Q_{l_{s,1}}^{\ell_{s,1}}\)\(\mb Q_{l_{s,2}}^{\ell_{s,2}}-\E\mb Q_{l_{s,2}}^{\ell_{s,2}}\)	
	\end{align}
	where $\sum_{(g(G))}$  takes all possible standard Type IV $\mb Q$-graph for given $G$,
	with
	$$\prod_{s=1}^{\gamma/2}\sigma_{l_{s,1},l_{s,2}}=\prod_{s=1}^{\gamma/2}\sum_{(g({l_{s,1},l_{s,2}}))}\sum_{(\cdot\cdot)}\E\(\mb Q_{l_{s,1}}^{\ell_{s,1}}-\E\mb Q_{l_{s,1}}^{\ell_{s,1}}\)\(\mb Q_{l_{s,2}}^{\ell_{s,2}}-\E\mb Q_{l_{s,2}}^{\ell_{s,2}}\),$$ we find that the latter one has more terms than the former one. However, the corresponding $Q$-graph linked with those additional terms is all Type III. Thus, by Lemma \ref{T3}, we obtain that
	$$\sum_{(g(G))}\sum_{(\cdot\cdot)}\prod_{s=1}^{\gamma/2}\E\(\mb Q_{l_{s,1}}^{\ell_{s,1}}-\E\mb Q_{l_{s,1}}^{\ell_{s,1}}\)\(\mb Q_{l_{s,2}}^{\ell_{s,2}}-\E\mb Q_{l_{s,2}}^{\ell_{s,2}}\)=\prod_{s=1}^{\gamma/2}\sigma_{l_{s,1},l_{s,2}}+o(n^{\gamma}).$$
	Then we get a conclusion that
	\begin{align}
		&n^{-\gamma}\sum_{(IV)}\sum_{(\cdot\cdot)}\E\prod_{\kappa=1}^{k}\prod_{\theta=1}^{m_k}\Bigg(\mb Q_{\kappa}(V_{\sum_{s=1}^{\kappa-1}sm_{s}+\theta},E_{\sum_{s=1}^{\kappa-1}sm_{s}+\theta},F_{\sum_{s=1}^{\kappa-1}sm_{s}+\theta})\\\notag
		&\quad\quad\quad\quad\quad\quad\quad\quad-\E\mb Q_{\kappa}(V_{\sum_{s=1}^{\kappa-1}sm_{s}+\theta},E_{\sum_{s=1}^{\kappa-1}sm_{s}+\theta},F_{\sum_{s=1}^{\kappa-1}sm_{s}+\theta})\Bigg)\\\notag
		=&\sum_{G}\prod_{s=1}^{\gamma/2}\sigma_{l_{s,1},l_{s,2}}+o(1),
	\end{align}
	where $\sum_{G}$ takes all possible of grouping $m_1$ 1's, $m_2$ 1's, $\cdots$, $m_k$ k's pairwise into $$\{(l_{1,1},l_{1,2}),(l_{2,1},l_{2,2}),\cdots,(l_{\gamma/2,1},l_{\gamma/2,2})\}.$$
	By Corollary 4.1 in \cite{jonsson1982some}, we know that it is exact the mixed moment of a certain multivariate normal distribution with zero mean vector and covariance matrix $\mb \Lambda=\(\sigma_{\phi,\psi}\)_{k\times k}$.
	
	This completes the proof of this lemma.
\end{proof}
\subsection{Complete the proof of Theorem \ref{kjoint}}
Adopt the notations in last subsection. When $\gamma$ is odd, then there will be no Type IV graph and thus $n^{-\gamma}\E\prod_{s=1}^k(L_s-\E L_s)^{m_s}=o(1)\sum_{(\cdot)}\leq o(1)C_{m,k}=o(1)$ by Fact.1, Fact.2 and Lemma \ref{T3}. When $\gamma$ is even, then by Lemma \ref{T4} we know that $n^{-\gamma}\E\prod_{s=1}^k(L_s-\E L_s)^{m_s}={\mathbb MN}[\mb 0,{\mb\Lambda}](m_1,\cdots,m_k)+o(1),$
where ${\mathbb M}(m_1,\cdots,m_k)$ is the mixed moment of a certain multivariate normal distribution with zero mean vector and covariance matrix $\mb \Lambda=\(\sigma_{\phi,\psi}\)_{k\times k}$.
Thus, we conclude the mixed moment $n^{-\gamma}\E\prod_{s=1}^k(L_s-\E L_s)^{m_s}$ will always convergence to the mixed moment of a certain multivariate normal distribution with zero mean vector and covariance matrix $\mb \Lambda=\(\sigma_{\phi,\psi}\)_{k\times k}$ and thus we are done.

\section{Some Lemmas}
This section is to give some lemmas that is useful in the proofs.
The following two lemmas follow from straightforward calculation thus we omit their proofs.
\begin{lemma}\label{lm1}
  Let $\mb x=\(x_1,x_2,\cdots,x_n\)'$ be a random vector, where $x_i$s' are i.i.d with mean zero, variance 1, then for any $n\times n$ matrices $\mb A=(a_{ij})$ and $\mb B=(b_{ij})$, we have
  \begin{align*}
    &\E\(\mb z'\mb A\mb z-\tr \mb A\)\(\mb z'\mb B\mb z-\tr \mb B\)
    =\(\E|x_1|^4-3\)\tr\(\mb A\du\mb A\)+\tr\(\mb A\mb B'\)+\tr\(\mb A\mb B\).
  \end{align*}
\end{lemma}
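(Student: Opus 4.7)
The plan is to evaluate the expectation by expanding both quadratic forms in coordinates and then classifying index patterns via the i.i.d.\ hypothesis. Writing
$$
\E(\mb x'\mb A\mb x)(\mb x'\mb B\mb x) \;=\; \sum_{i,j,k,l} a_{ij}b_{kl}\,\E(x_ix_jx_kx_l),
$$
and noting that $\E(\mb x'\mb A\mb x)=\tr\mb A$ and $\E(\mb x'\mb B\mb x)=\tr\mb B$, the centered product reduces to this fourfold sum minus $(\tr\mb A)(\tr\mb B)$, so the task is just to identify the contributing quadruples $(i,j,k,l)$.

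Next I would classify the nonzero terms. Because the $x_i$ are independent with mean zero, any index that does not coincide with another kills the term. Three coinciding indices (say $i=j=k\neq l$) also vanish, since then the factor splits as $\E(x_i^3)\E(x_l)=0$. Thus only two scenarios survive. First, all four indices equal, giving $\mu_4\sum_i a_{ii}b_{ii}$ where $\mu_4=\E x_1^4$. Second, the four indices partition into two distinct matched pairs, in one of the three canonical ways: $\{i=j,\,k=l,\,i\neq k\}$ contributing $\sum_{i\neq k}a_{ii}b_{kk}$; $\{i=k,\,j=l,\,i\neq j\}$ contributing $\sum_{i\neq j}a_{ij}b_{ij}$; and $\{i=l,\,j=k,\,i\neq j\}$ contributing $\sum_{i\neq j}a_{ij}b_{ji}$.

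Finally I would lift each restricted sum to an unrestricted one, paying the diagonal over-count. Using the identities
$$
\sum_{i,k} a_{ii}b_{kk}=(\tr\mb A)(\tr\mb B),\quad \sum_{i,j}a_{ij}b_{ij}=\tr(\mb A\mb B'),\quad \sum_{i,j}a_{ij}b_{ji}=\tr(\mb A\mb B),
$$
and $\sum_i a_{ii}b_{ii}=\tr(\mb A\du\mb B)$, summing the four scenarios and collecting the three diagonal corrections attaches a coefficient of $(\mu_4-3)$ to $\tr(\mb A\du\mb B)$. Subtracting $(\tr\mb A)(\tr\mb B)$ cancels the rank-one piece and leaves exactly $\nu_4\tr(\mb A\du\mb B)+\tr(\mb A\mb B')+\tr(\mb A\mb B)$, which is the claimed formula (the sign bookkeeping on the three over-counted diagonals to get the $-3$ is the only step that requires any care, but it is purely combinatorial and presents no real obstacle).
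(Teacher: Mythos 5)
Your proof is correct, and it is precisely the straightforward index-classification computation that the paper itself omits (the paper states that this lemma ``follows from straightforward calculation'' and gives no proof), so there is nothing to compare beyond noting that your expansion into the four surviving index patterns, with the $-3$ arising from the three over-counted diagonals, is the standard and intended argument. You have also silently corrected two typos in the statement: the vector should be $\mb x$ rather than $\mb z$, and the quartic term should read $\(\E|x_1|^4-3\)\tr\(\mb A\du\mb B\)$ rather than $\tr\(\mb A\du\mb A\)$ --- your corrected form is the one the paper actually uses downstream (e.g.\ in $\E_{k-1}D_{k,2}^2$, where the two matrices $\mb C_i$ and $\mb C_j$ are distinct), so your version is the right one.
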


\begin{lemma}\label{lm2}
  Let $\mb x=\(x_1,x_2,\cdots,x_n\)'$ be a random vector, where $x_i$s' are i.i.d with mean zero, variance 1, then for any $n\times n$ matrices $\mb A=(a_{ij})$, we have
  \begin{align*}
    &\E\(\sum_{i=1}^n\(\sum_{j=1}^n a_{i,j}x_j\)^4\)=3\tr\(\mb A'\mb A\du\mb A'\mb A\)+\(\E|x_1|^4-3\)\tr\(\mb A'\du\mb A'\)\(\mb A\du\mb A\).
  \end{align*}
\end{lemma}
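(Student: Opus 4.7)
The lemma is a direct second-moment computation relying only on the independence of the $x_j$'s and their moment assumptions, so the plan is to expand, pair indices, and then repackage the resulting sums as Hadamard-product traces. First I introduce the shorthand $y_i = \sum_{j=1}^n a_{ij} x_j$ and expand $y_i^4$ as the fourfold sum $\sum_{j_1,j_2,j_3,j_4} a_{ij_1}a_{ij_2}a_{ij_3}a_{ij_4} x_{j_1}x_{j_2}x_{j_3}x_{j_4}$. Taking expectation and using that the $x_j$ are i.i.d.\ with mean zero and unit variance, $\E(x_{j_1}x_{j_2}x_{j_3}x_{j_4})$ is nonzero only when the indices split into coincident pairs. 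The surviving configurations are (a) all four indices equal, each contributing the factor $\E x_1^4$, and (b) two disjoint pairs of coincident indices, where the three possible pairings $(12)(34),(13)(24),(14)(23)$ each contribute a factor of $1$.

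This produces
\[
\E y_i^4 \;=\; \E x_1^4\sum_j a_{ij}^4 \;+\; 3\sum_{j\neq k}a_{ij}^2 a_{ik}^2
\;=\; 3\Bigl(\sum_j a_{ij}^2\Bigr)^{\!2} + (\E|x_1|^4 - 3)\sum_j a_{ij}^4,
\]
after writing $\sum_{j\neq k} = \sum_{j,k}-\sum_{j=k}$ and collecting the $\sum_j a_{ij}^4$ terms. Summing over $i$ yields $3\sum_i (AA')_{ii}^2 + (\E|x_1|^4-3)\sum_{i,j} a_{ij}^4$.

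It remains to recognise each piece as the claimed Hadamard-product trace. Using the elementary identity $\tr(M\du M)=\sum_i M_{ii}^2$ applied to $M=AA'$ turns the first sum into a Hadamard-square trace; for the second, the $(i,k)$ entry of $(A'\du A')(A\du A)$ equals $\sum_j a_{ji}^2 a_{jk}^2$, whose diagonal sum is exactly $\sum_{i,j} a_{ij}^4$, giving the factor $\tr\bigl((A'\du A')(A\du A)\bigr)$. Matching the stated form $3\tr(A'A\du A'A)$ uses that the lemma is invoked in the paper only with symmetric $A$ (namely $A=\Sigma_n$, where $(AA')_{ii}=(A'A)_{ii}$), so the two trace expressions coincide in the intended use. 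No serious obstacle is anticipated; the only care needed is the combinatorial bookkeeping of the three pairings of four indices and the resulting excess-kurtosis correction $\E|x_1|^4 - 3$.
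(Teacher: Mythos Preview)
The paper omits the proof of this lemma entirely, stating only that it ``follows from straightforward calculation.'' Your expansion-and-pairing argument is precisely the standard route and is correct.

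Your observation about the first trace term is also well taken: the computation naturally produces $3\sum_i\bigl(\sum_j a_{ij}^2\bigr)^2 = 3\,\tr(\mb A\mb A'\circ\mb A\mb A')$, not the stated $3\,\tr(\mb A'\mb A\circ\mb A'\mb A)$, and these differ for non-symmetric $\mb A$. The lemma as printed therefore requires $\mb A$ symmetric (or should have $\mb A\mb A'$ in place of $\mb A'\mb A$). Since the only invocation in the paper is with $\mb A=\mb\Sigma_n$, the slip is harmless in context, exactly as you say.
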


\begin{lemma}\label{lm3}
  Let $\mb x=\(x_1,x_2,\cdots,x_n\)'$ be a random vector, where $x_i$ i.i.d with mean zero, variance 1, then for any symmetric $n\times n$ matrices $\mb T=(t_{ij})$ and $\mb W=(w_{ij})$, we have
  \begin{align*}
    &\E\(\mb x'\mb T\mb x\mb x'\mb T\mb x\mb x'\mb W\mb x\)\\\notag
=&\(\tr\mb T\)^2\tr\mb W+2\tr\(\mb T\)^2\tr\mb W+4\tr\(\mb T\)\tr\mb T\mb W+8\tr\(\mb T\mb T\mb W\)\\\notag
&+(\mu_4-3)\(2\tr\(\mb T\du\mb W\)\tr \mb T+\tr\(\mb T\du\mb T\)\tr\mb W\)+(4\mu_4-20)\tr\(\mb T\mb D_{\mb W}\mb T\)\\\notag
&+(8\mu_4-16)\tr\(\mb W\mb D_{\mb T}\mb T\)+\mu_3^2\(4d_{\mb T}'\mb T d_{\mb W}+2d_{\mb T}'\mb W d_{\mb T}+4\mb 1'\(\mb T\du \mb T\du\mb W\)\mb 1\)\\\notag
&+\(\mu_6-15\mu_4-10\mu_3^2+30\)\tr\(\mb T\du \mb T\du\mb W\).
  \end{align*}
\end{lemma}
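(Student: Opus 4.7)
The plan is to prove Lemma \ref{lm3} by direct moment expansion together with a careful classification of index partitions. Expanding the three quadratic forms gives
\begin{equation*}
\E\bigl(\mb x'\mb T\mb x\,\mb x'\mb T\mb x\,\mb x'\mb W\mb x\bigr)
= \sum_{i_1,\ldots,i_6} t_{i_1i_2}\,t_{i_3i_4}\,w_{i_5i_6}\,\E\bigl[x_{i_1}x_{i_2}x_{i_3}x_{i_4}x_{i_5}x_{i_6}\bigr].
\end{equation*}
Because the $x_i$ are i.i.d.\ with mean zero, the inner expectation vanishes unless every distinct value of the index $i_\ell$ appears at least twice among the six positions. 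So the first step is to organize the sum according to the partition of the six label positions $\{1,\ldots,6\}$ induced by index coincidences, keeping only partitions whose blocks all have size $\ge 2$. These partitions are of four types: $(2,2,2)$ giving $\mu_2^3=1$, $(3,3)$ giving $\mu_3^2$, $(2,4)$ giving $\mu_2\mu_4=\mu_4$, and $(6)$ giving $\mu_6$.

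Next I would handle the Gaussian-like skeleton $(2,2,2)$ first, since it is what survives when $\mu_3=0$ and $\mu_4=3$. There are $15$ pairings; each pairs a position from one of the three factors with another, and the corresponding contribution is a product of three index-sums that glue the matrices $\mb T,\mb T,\mb W$ through shared indices. Tracking how the pair patterns connect the three quadratic forms yields the $\bigl(\tr\mb T\bigr)^2\tr\mb W + 2\tr(\mb T^2)\tr\mb W + 4\tr(\mb T)\tr(\mb T\mb W)+ 8\tr(\mb T^2\mb W)$ skeleton, with coefficients $1,2,4,8$ coming from the multiplicities of pairings that realize each topology (a self-pair inside a factor produces $\tr$, a cross-pair produces matrix multiplication, and symmetry of $\mb T$ and $\mb W$ merges conjugate pairings).

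Then I would compute the corrections from non-Gaussian partitions. For the $(2,4)$ type there are $15$ partitions, but many give the same trace expression after using symmetry of $\mb T,\mb W$; their contribution splits into $(\mu_4-3)$ times the diagonal-weighted traces $\tr(\mb T\circ \mb W)\tr\mb T$, $\tr(\mb T\circ \mb T)\tr\mb W$, $\tr(\mb T\mb D_{\mb W}\mb T)$, $\tr(\mb W\mb D_{\mb T}\mb T)$, with combinatorial coefficients that exactly produce the $2,1,4,8$ prefactors stated in the lemma. The $(3,3)$ type yields the four $\mu_3^2$-terms involving $\di(\mb T),\di(\mb W)$ and the triple Hadamard $\mb T\circ\mb T\circ \mb W$; and the single $(6)$-type partition, after subtracting the overcounts already attributed to the coarser partitions, gives the final coefficient $\mu_6-15\mu_4-10\mu_3^2+30$ of $\tr(\mb T\circ\mb T\circ\mb W)$ via standard inclusion–exclusion among partitions (this is how those three ``Gaussian residuals'' $15,10,30$ arise). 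Collecting everything gives the displayed formula.

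The main obstacle is bookkeeping rather than anything deep: one has to list each of the $1+15+10+15=41$ admissible set partitions of $\{1,\ldots,6\}$, identify which of the three quadratic-form blocks each position belongs to, determine the resulting trace/Hadamard/diagonal expression after summing $t_{\cdot\cdot}$ and $w_{\cdot\cdot}$ subject to the coincidence constraints, and finally regroup them by the cumulant structure $\mu_4-3$, $\mu_3^2$, $\mu_6-15\mu_4-10\mu_3^2+30$ so that the formula holds without any constraint on the moments. Symmetry of $\mb T$ and $\mb W$ is used throughout to collapse mirror-image pairings; once the $(2,2,2)$ skeleton is in place the remaining computations are routine.
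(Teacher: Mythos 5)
Your proposal follows essentially the same route as the paper's own proof: expand the product of quadratic forms into a six-index sum, classify the surviving terms by the coincidence pattern of the indices into the four types $(2,2,2)$, $(2,4)$, $(3,3)$, $(6)$ (the paper's parts 1--4), evaluate each restricted sum via inclusion--exclusion to convert it into unrestricted trace/Hadamard/diagonal expressions, and regroup to obtain the stated coefficients. The only caveat is notational looseness in your $(2,4)$ discussion --- the prefactors $(4\mu_4-20)$ and $(8\mu_4-16)$ in the lemma are not pure multiples of $(\mu_4-3)$ but absorb constants coming from the $(2,2,2)$ inclusion--exclusion --- which is exactly the cross-type bookkeeping your plan already anticipates.
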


\begin{proof}
Notice that
\begin{align*}
  \E\(\mb x'\mb T\mb x\mb x'\mb T\mb x\mb x'\mb W\mb x\)=\E\sum_{i_1,\cdots,i_6}x_{i_1}\cdots x_{i_6}t_{i_1i_2}t_{i_3i_4}w_{i_5i_6}.
\end{align*}
We separate the expectation into four parts.
\begin{description}
  \item[part 1] We use $\E(2,2,2)$ to denote the part of the expectation that contains all the terms in which the sequence $\(i_1,\cdots,i_6\)$ has three different entries, each appears twice.
Then we have
\begin{align*}
  \E(2,2,2)=\E(2,2,2)_1+\E(2,2,2)_2+\E(2,2,2)_3+\E(2,2,2)_4,
\end{align*}
where
\begin{align*}
  &\E(2,2,2)_1=\sum_{i_1\neq i_2\neq i_3}t_{i_1i_1}t_{i_2i_2}w_{i_3i_3}\\\notag
&=\sum_{i_1,i_2,i_3}t_{i_1i_1}t_{i_2i_2}w_{i_3i_3}-2\sum_{i_1\neq i_2}t_{i_1i_1}t_{i_2i_2}w_{i_1i_1}-\sum_{i_1\neq i_2}t_{i_1i_1}^2w_{i_2i_2}+2\sum_{i_1}t_{i_1i_1}^2w_{i_1i_1}\\\notag
&=\sum_{i_1,i_2,i_3}t_{i_1i_1}t_{i_2i_2}w_{i_3i_3}-2\sum_{i_1,i_2}t_{i_1i_1}t_{i_2i_2}w_{i_1i_1}-\sum_{i_1,i_2}t_{i_1i_1}^2w_{i_2i_2}
+2\sum_{i_1}t_{i_1i_1}^2w_{i_1i_1}\\\notag
&=\(\tr\mb T\)^2\tr\mb W-2\tr\(\mb T\du\mb W\)\tr \mb T-\tr\(\mb T\du\mb T\)\tr\mb W+2\tr\(\mb T\du \mb T\du\mb W\),
\end{align*}

\begin{align*}
  &\E(2,2,2)_2=2\sum_{i_1\neq i_2\neq i_3}t_{i_1i_2}^2w_{i_3i_3}\\\notag
&=2\sum_{i_1,i_2,i_3}t_{i_1i_2}^2w_{i_3i_3}-4\sum_{i_1,i_2}t_{i_1i_2}^2w_{i_1i_1}-2\sum_{i_1, i_2}t_{i_1i_1}^2w_{i_2i_2}+4\sum_{i_1}t_{i_1i_1}^2w_{i_1i_1}\\\notag
&=2\tr\(\mb T\)^2\tr\mb W-4\tr\(\mb T\mb D_{\mb W}\mb T\)-2\tr\(\mb T\du\mb T\)\tr\mb W+4\tr\(\mb T\du \mb T\du\mb W\),
\end{align*}

\begin{align*}
  &\E(2,2,2)_3=4\sum_{i_1\neq i_2\neq i_3}t_{i_1i_1}t_{i_2,i_3}w_{i_3i_3}\\\notag
&=4\sum_{i_1,i_2,i_3}t_{i_1i_2}^2w_{i_3i_3}-8\sum_{i_1,i_2}t_{i_1i_1}t_{i_1i_2}w_{i_1i_2}\\
&\quad\quad-4\sum_{i_1, i_2}t_{i_1i_1}t_{i_2i_2}w_{i_2i_2}+8\sum_{i_1}t_{i_1i_1}^2w_{i_1i_1}\\\notag
&=4\tr\(\mb T\)\tr\mb T\mb W-8\tr\(\mb W\mb D_{\mb T}\mb T\)-4\tr\mb T\tr\(\mb T\du\mb W\)+8\tr\(\mb T\du \mb T\du\mb W\),
\end{align*}
and
\begin{align*}
  &\E(2,2,2)_4=8\sum_{i_1\neq i_2\neq i_3}t_{i_1i_2}t_{i_2,i_3}w_{i_3i_1}\\\notag
&=8\sum_{i_1,i_2,i_3}t_{i_1i_2}t_{i_2,i_3}w_{i_3i_1}-16\sum_{i_1,i_2}t_{i_1i_2}w_{i_2,i_2}t_{i_2i_1}\\
&\quad\quad-8\sum_{i_1, i_2}t_{i_1i_1}t_{i_2i_1}w_{i_1i_2}+16\sum_{i_1}t_{i_1i_1}^2w_{i_1i_1}\\\notag
&=8\tr\(\mb T\mb T\mb W\)-16\tr\(\mb T\mb D_{\mb W}\mb T\)-8\tr\(\mb W\mb D_{\mb T}\mb T\)+16\tr\(\mb T\du \mb T\du\mb W\).
\end{align*}

  \item[part 2] We use $\E(2,4)$ to denote the part of the expectation that contains all the terms in which the sequence $\(i_1,\cdots,i_6\)$ has two different entries, one appears twice, the other appears four times.
The same, we have
\begin{align*}
  &\E(2,4)\\
  =&\mu_4\Bigg(2\sum_{i_1\neq i_2}t_{i_1i_1}t_{i_2,i_2}w_{i_2i_2}+\sum_{i_1\neq i_2}t_{i_1i_1}^2w_{i_2i_2}\\
  &\quad+8\sum_{i_1\neq i_2}t_{i_1i_1}t_{i_1,i_2}w_{i_1i_2}+4\sum_{i_1\neq i_2}t_{i_1i_2}^2w_{i_2i_2}\Bigg)\\\notag
=&\mu_4\Bigg(2\tr\mb T\(\mb T\du\mb W\)+\tr\(\mb T\du\mb T\)\tr \mb W+8\tr\(\mb W\mb D_{\mb T}\mb T\)\\
&\quad+4\tr\(\mb T\mb D_{\mb W}\mb T\)-15\tr\(\mb T\du \mb T\du\mb W\)\Bigg).
\end{align*}

  \item[part 3] We use $\E(3,3)$ to denote the part of the expectation that contains all the terms in which the sequence $\(i_1,\cdots,i_6\)$ has two different entries, both appear three times.
We have
\begin{align*}
  &\E(3,3)=\mu_3^2\(4\sum_{i_1\neq i_2}t_{i_1i_1}t_{i_1,i_2}w_{i_2i_2}+2\sum_{i_1\neq i_2}t_{i_1i_1}t_{i_2i_2}w_{i_1i_2}+4\sum_{i_1\neq i_2}t_{i_1,i_2}^2w_{i_1i_2}\)\\\notag
&=\mu_3^2\(4d_{\mb T}'\mb T d_{\mb W}+2d_{\mb T}'\mb W d_{\mb T}+4\mb 1'\(\mb T\du \mb T\du\mb W\)\mb 1-10\tr\(\mb T\du \mb T\du\mb W\)\).
\end{align*}

  \item[part 4] We use $\E(6)$ to denote the part of the expectation that contains all the terms in which all the entries in the sequence $\(i_1,\cdots,i_6\)$ are same.
We have
\begin{align*}
  &\E(6)=\mu_6\tr\(\mb T\du \mb T\du\mb W\).
\end{align*}

Combining the argument above, we complete the proof of this lemma.
\end{description}
\end{proof}
The next lemma is the well known central limit theorem for martingale.
\begin{lemma}[Theorem 35.12 of \cite{Billingsley95P}]\label{cltmar}
Suppose that for each $n$, $Y_{n 1}, Y_{n 2}, \cdots, Y_{n r_{n}}$ is a real martingale difference sequence with respect to the increasing $\sigma$-field $\left\{\mathcal{F}_{n j}\right\}$ having second moments. If, as $n\to \infty$,
$$\sum_{j=1}^{r_{n}} \mathrm{E}\left(Y_{n j}^{2} | \mathcal{F}_{n, j-1}\right) \stackrel{i . p .}{\longrightarrow} \sigma^{2},$$
where $\sigma^2$  is a positive constant, and, for each $\varepsilon>0,$
$$
\sum_{j=1}^{r_{n}} \mathrm{E}\left(Y_{n j}^{2} I_{\left(\left|Y_{n j}\right| \geq \varepsilon\right)}\right) \rightarrow 0,
$$
then
$$\sum_{j=1}^{r_{n}} Y_{n r_{n}} \stackrel{\mathcal{D}}{\rightarrow} N\left(0, \sigma^{2}\right).$$
\end{lemma}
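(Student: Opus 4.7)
The plan is to establish the martingale CLT via convergence of characteristic functions, following the classical McLeish/Brown approach: I would show that for each fixed $t\in\RR$, $\phi_n(t):=\E\exp(itS_n)\to e^{-t^2\sigma^2/2}$, where $S_n=\sum_{j=1}^{r_n}Y_{nj}$, and then invoke L\'evy's continuity theorem. Two preliminary reductions are needed. First, the Lindeberg hypothesis implies $\max_{j\le r_n}|Y_{nj}|\to 0$ in probability, since $\mathrm{P}(\max_j|Y_{nj}|\ge\varepsilon)\le\varepsilon^{-2}\sum_j\E(Y_{nj}^2\mathbf 1_{|Y_{nj}|\ge\varepsilon})\to 0$. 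Second, one may assume without loss of generality that the predictable quadratic variation $V_n:=\sum_{j\le r_n}\E(Y_{nj}^2\mid\mathcal{F}_{n,j-1})$ is uniformly bounded by $\sigma^2+1$ almost surely, by stopping the sequence at the first $j$ for which this threshold is exceeded; the hypothesized convergence $V_n\to\sigma^2$ in probability ensures this stopping event has vanishing probability, so the modification does not affect the limit.

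The core argument is a telescoping identity for $T_{nj}:=e^{itY_{nj}}$. Writing
\begin{equation*}
\prod_{j=1}^{r_n}T_{nj}-1 \;=\; \sum_{j=1}^{r_n}\Big(\prod_{k<j}T_{nk}\Big)(T_{nj}-1),
\end{equation*}
and Taylor-expanding $|T_{nj}-1-itY_{nj}+\tfrac12 t^2Y_{nj}^2|\le\min(t^2Y_{nj}^2,\,|t|^3|Y_{nj}|^3/6)$, I would take expectations and exploit the tower property together with $\E(Y_{nj}\mid\mathcal{F}_{n,j-1})=0$ to eliminate the linear term. The clean way to convert this one-term expansion into the full exponential limit is to insert a compensator and work with the modified factors $\widetilde T_{nj}:=T_{nj}\exp(\tfrac12 t^2\E(Y_{nj}^2\mid\mathcal{F}_{n,j-1}))$; after the Taylor approximation, the partial products $M_{nj}:=\prod_{k\le j}\widetilde T_{nk}$ form an asymptotic martingale with $\E M_{n,r_n}\to 1$, while the error between $M_{n,r_n}$ and $e^{t^2V_n/2}\prod_jT_{nj}$ is controlled in conditional expectation by
\begin{equation*}
\sum_{j=1}^{r_n}\Big\{\tfrac12 t^2\,\E\big(Y_{nj}^2\mathbf 1_{|Y_{nj}|\ge\varepsilon}\,\big|\,\mathcal{F}_{n,j-1}\big)+\tfrac{|t|^3}{6}\varepsilon\,\E\big(Y_{nj}^2\,\big|\,\mathcal{F}_{n,j-1}\big)\Big\}.
\end{equation*}

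Summing these per-$j$ bounds, the first piece vanishes by the Lindeberg condition, while the second is dominated by $\tfrac{|t|^3}{6}\varepsilon(\sigma^2+1)$ and can be made arbitrarily small by sending $\varepsilon\to 0$. Combined with $V_n\to\sigma^2$ in probability and the uniform bound $|\widetilde T_{nj}|\le e^{t^2(\sigma^2+1)/2}$ that justifies dominated convergence, one concludes $\E\prod_jT_{nj}\to e^{-t^2\sigma^2/2}$, which yields the stated weak convergence via L\'evy. The main technical obstacle is controlling the accumulation of Taylor-remainder errors across $r_n$ (possibly growing) factors uniformly in $n$; the Lindeberg condition is precisely what is required here, since it forces the cumulative truncation error to vanish rather than merely being small per summand, and the compensator absorbs the deterministic quadratic drift so that the remaining martingale fluctuations are genuinely negligible.
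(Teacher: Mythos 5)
You should note at the outset that the paper contains no proof of this lemma: it is imported verbatim as Theorem 35.12 of Billingsley (1995), so there is no internal argument to compare against, and the right benchmark is the proof in the cited source. Your sketch reconstructs essentially that proof --- the McLeish/Brown characteristic-function argument with predictable stopping to force $V_n\le\sigma^2+1$, the telescoping product of the factors $e^{itY_{nj}}$, and the compensator $e^{t^2V_n/2}$ --- and the outline is correct, including the two preliminary reductions (the stopping event is indeed $\mathcal{F}_{n,j-1}$-measurable because the partial sums of $\sigma_{nj}^2:=\E(Y_{nj}^2\mid\mathcal{F}_{n,j-1})$ are predictable, and it has vanishing probability by the hypothesis $V_n\to\sigma^2$ i.p.). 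One bookkeeping term is missing from your error estimate, however: when you compute the per-step conditional expectation $\exp\(\tfrac{t^2}{2}\sigma_{nj}^2\)\E\(e^{itY_{nj}}\mid\mathcal{F}_{n,j-1}\)=e^{a_{nj}}\(1-a_{nj}+R_{nj}\)$ with $a_{nj}=\tfrac{t^2}{2}\sigma_{nj}^2$, the identity $e^{a}(1-a)=1+O(a^2)$ contributes, besides the Taylor remainder $R_{nj}$ that your displayed bound tracks, a second-order error of size $O(\sigma_{nj}^4)$ per step; your cumulative bound must therefore be augmented by $C_t\sum_{j\le r_n}\sigma_{nj}^4$. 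This is harmless but needs a line: since $\max_j\sigma_{nj}^2\le\varepsilon^2+\sum_j\E\(Y_{nj}^2\mathbf 1_{\{|Y_{nj}|\ge\varepsilon\}}\mid\mathcal{F}_{n,j-1}\)$ and the conditional sum has expectation tending to zero by the Lindeberg hypothesis, one gets $\max_j\sigma_{nj}^2\to0$ in probability, whence after stopping $\sum_j\sigma_{nj}^4\le(\sigma^2+1)\max_j\sigma_{nj}^2\to0$, and the same uniform bound and dominated convergence you invoke in the final step absorb this term. With that insertion your argument is complete and coincides with the standard proof of the theorem the paper cites.
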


\vskip 14pt
\noindent {\large\bf Acknowledgments}

\noindent {The author would like to thank Prof. Weiming Li for his constructive suggestions on the organization of this paper.}


\end{document}